\DeclareMathOperator*{\argmax}{arg\,max}
\DeclareMathOperator*{\argmin}{arg\,min}
\newcommand{\R}{\mathbb{R}}
\newcommand{\E}{\mathbb{E}}
\newcommand{\Z}{\mathbb{Z}}
\newcommand{\T}{\mathbb{T}}
\newcommand{\N}{\mathbb{N}}
\newcommand\prox{\mathrm{prox}}
\newcommand\SPD{\mathrm{SPD}}
\newcommand{\tT}{\mathrm{T}}
\newcommand{\wN}{\mathcal{N}_w}
\newcommand{\diag}{\mathrm{diag}}
\newcommand{\n}{\phantom{0}}
\newcommand{\lin}{\mathrm{span}}
\theoremstyle{plain}
\newtheorem{lemma}{Lemma}[section]
\newtheorem{theorem}[lemma]{Theorem}
\newtheorem{proposition}[lemma]{Proposition}
\newtheorem{remark}[lemma]{Remark}
\theoremstyle{definition}
\newtheorem{example}[lemma]{Example}
\title{Sparse Mixture Models inspired by ANOVA  Decompositions
}
\author{Johannes Hertrich\footnotemark[1]
	\and
	Fatima Antarou Ba\footnotemark[1]
	\and
	Gabriele Steidl\footnotemark[1]
}
\begin{document}
	\date{\today}
	\maketitle
	
	\footnotetext[1]{
		TU Berlin,
		Stra{\ss}e des 17. Juni 136, 
		D-10587 Berlin, Germany,
		\{j.hertrich, fatimaba, steidl\}@math.tu-berlin.de.
	} 
	
	\begin{abstract}
		Inspired by the analysis of variance (ANOVA) decomposition of functions 
		we propose a Gaussian-Uniform mixture model on the high-dimensional torus
		which relies on the assumption that the function 
		we wish to approximate can be well explained by limited variable interactions.
		We consider three approaches, namely
		wrapped Gaussians, diagonal wrapped Gaussians and products of
		von Mises distributions.
		The sparsity of the mixture model is ensured by the fact that its summands 
		are 
		products of Gaussian-like density functions acting on low dimensional spaces
		and uniform probability densities defined on the remaining directions.
		To learn such a sparse mixture model from given samples, we propose
		an objective function consisting of the negative log-likelihood function of the mixture model
		and a regularizer that penalizes the number of its summands.
		For minimizing this functional we combine the Expectation Maximization algorithm with a proximal step 
		that takes the regularizer into account. 
		To decide which summands of the mixture model
		are important, we apply a Kolmogorov-Smirnov test.
		Numerical examples demonstrate the performance of our approach.
	\end{abstract}
	\section{Introduction} \label{sec:intro}
	Most high-dimensional real-world systems are dominated by a small number of low-complexity interactions
	\cite{WH2011}.
	This is the background of extensive research to represent functions acting on high-dimensional data
	by functions defined on lower dimensional spaces. Approaches include
	active subspace methods  \cite{CDW2014,CEHW2017,FSV2012} 
	and random features \cite{CJJ2012,HSSTTW2021,LTOS2019,RR2008,YLMJZ2012}. 
	
	This paper was inspired by the analysis of variance (ANOVA) decomposition
	of functions \cite{CMO1997,Gu2013,Holtz2011,KSWW2010,LO2006} 
	which decomposes a function uniquely into the sum of functions depending on the different 
	variable combinations.
	In practice it can often be assumed that the significant part of a functions 
	can be explained by the simultaneous interactions of only a small number of variables,
	which is also in the spirit of \cite{DPW2011}.
	The amazing result of  Potts and Schmischke in \cite{BPS2020,PS2021a,PS2021b} 
	show that high-dimensional functions with a sparse ANOVA decomposition can be reconstructed using
	their approximation in the Fourier domain by rather few samples $t^i \in \mathbb T^d$ and $f(t^i) \in \mathbb R$,
	$i=1,\ldots,N$. 
	While the theory relies mainly on uniformly sampled points on the high-dimensional torus $\T^d$ or on
	$[0,1]^d$, also real-world data sets can be approximated in a way that beats
	state-of-the-art methods as the gradient boosting machine \cite{GPT2020},
	random forest \cite{GPT2020}, sparse random features \cite{HSSTTW2021} 
	and local learning regression neural networks \cite{KM2015}.
	
	In this paper, we assume that we are given high-dimensional samples $(x^i)_i$ 
	from a distribution with unknown 
	probability density function $f$ rather than interpolation knots $(t^i, f(t^i))_i$.  
	Since e.g.\ attribute ranking can be used to remove unimportant variables immediately and to reduce the dimensionality of the problem,
	we concentrate on functions, where each variable has an influence, but not simultaneously with all other ones.
	Then, we are interested in mixture models with sparse components in the sense that
	they depend only on the data in smaller dimensions.
	We propose to learn such a mixture model by minimizing a penalized negative log-likelihhod function
	in connection with a Kolmogorov-Smirnov test to find the active variables in the summands of the mixture model.
	Once a mixture model is fitted, a natural way to identify the influence of attributes to the outcome is given by
	adjusting samples to the appropriate summands of the mixture.
	
	Our model appears to be opposite to recently introduced 
	mixture models which components rely on projections into sparse subspaces of the high-dimensional data space
	as mixtures of probabilistic PCAs (MPPCA) \cite{TB1999}, 
	high-dimensional data clustering (HDDC) \cite{BGS2006}, 
	high-dimensional mixture models for unsupervised image denoising (HDMI) \cite{HBD2018}, and PCA-GMMs \cite{HNABBSS2020}. 
	For more information, see Remark \ref{rem:opposite}.
	
	The paper is organized as follows: 
	In Section \ref{sec:anova}, we first recall the sparse ANOVA decomposition on the $d$-dimensional torus.
	Then we introduce appropriate sparse mixture models having  components which are products of a
	Gaussian-like density function on the $n$-dimensional torus ($n \ll d$) and a the uniform density
	on the $d-n$-dimensional torus. 
	We propose three Gaussian-like settings, namely the wrapped normal distribution,
	the diagonal wrapped normal distribution, and products of von Mises distributions.
	Further, we discuss the ANOVA decomposition of the mixture models 
	based on the notation of identifiable parameterized families of functions.
	Section \ref{sec:spamm} deals with the learning of the sparse mixture model.
	Based on an objective function consisting of the negative log-likelihood function penalized
	by a sparsity term for the number of coefficients, we propose to
	apply an Expectation Maximization (EM) algorithm in combination 
	with a proximal step and a Kolmogorov-Smirnov test.
	Section \ref{sec:numerics} demonstrates the performance of our model by several examples.
	{The code is available online\footnote{\url{https://github.com/johertrich/Sparse_Mixture_Models}}.}
	The Appendix \ref{sec:A} summarizes the EM algorithms for the tree Gaussian-like mixture models,
	and Appendix \ref{sec:test} briefly shows how the Kolmogorov-Smirnov test works.
	
	\section{ANOVA Decomposition and Mixture Models} \label{sec:anova}
	Let $[d]\coloneqq\{1,...,d\}$ with the convention that $[0]=\emptyset$, and let $\mathcal P([d])$
	be the power set of $[d]$.
	Further, for $u \subseteq [d]$, we write $u^c\coloneqq [d]\backslash u$ and $x_u\coloneqq (x_i)_{i\in u}$.
	By $\T^d=\R^d/\Z^d=[0,1[^d$, we denote the $d$-dimensional torus and by $I_d$ the $d \times d$ identity matrix.
	
	We are interested in the additive decomposition of integrable functions  $f:\T^d \rightarrow \R$  into lower dimensional components 
	\begin{align}\label{eq_spam}
		f(x)=\sum_{u\subseteq [d]} f_u(x_u), \quad f_u\colon \T^{|u|}\to\R.
	\end{align}
	In general, such decomposition is not unique.
	We rely on two special decompositions, namely the ANOVA decomposition which was the motivation of this work
	and sparse mixture models. In the following we introduce both concepts and explain their relation.
	
	\paragraph{ANOVA decomposition}
	For any integrable function $f:\T^d \rightarrow \R$, there exists
	a unique decomposition of the form \eqref{eq_spam},
	the so-called \emph{analysis of variance (ANOVA) decomposition} determined by
	\begin{align} \label{eq:impo}
		f_u  \coloneqq (P_u f) - \sum_{v\varsubsetneq  u} f_{v}
		= \sum_{v \subseteq u} (-1)^{|u| - |v|} P_v f,
	\end{align}
	where
	\begin{equation} \label{eq:proj}
		(P_u f)(x) \coloneqq \int_{\T^{d-|u|} } f(x) \, d x_{u^c}.
	\end{equation} 
	
	The following proposition recalls that the ANOVA decomposition of a function which is the sum of
	lower dimensional functions can only contain summands acting on the same subspaces.
	
	\begin{proposition}\label{lem:ano}
		Let $W \subseteq \mathcal P([d])$. Then, a function $f$ of the form
		\begin{equation} \label{eins}
			f(x) = \sum_{w \in W} g_w (x_w)
		\end{equation}
		has an ANOVA decomposition of the form
		$$
		f = \sum_{u \in \bar W}  f_u,
		$$
		where $\bar W$ denotes the set $\{u\subseteq w: w\in W\}$.
	\end{proposition}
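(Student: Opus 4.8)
The plan is to exploit the uniqueness of the ANOVA decomposition together with the explicit formula \eqref{eq:impo}. Since that decomposition is unique, it suffices to show that the components $f_u$ computed from \eqref{eq:impo} vanish for every $u \notin \bar W$, i.e.\ for every $u$ that is contained in no $w \in W$. Using the linearity of the projections $P_v$ from \eqref{eq:proj}, I would first write
\[
	f_u = \sum_{v \subseteq u} (-1)^{|u|-|v|} P_v f = \sum_{w \in W} h_{u,w}, \qquad
	h_{u,w} \coloneqq \sum_{v \subseteq u} (-1)^{|u|-|v|} P_v g_w,
\]
and thereby reduce the claim to showing that $h_{u,w} = 0$ whenever $u \not\subseteq w$.

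The key observation concerns the action of $P_v$ on a single summand $g_w(x_w)$. Since $g_w$ does not depend on any variable $x_i$ with $i \notin w$, and since integrating over a single copy of the torus contributes only the factor $\int_{\T} 1 \, \mathrm{d}x_i = 1$, the projection $P_v g_w$ depends only on the variables indexed by $v \cap w$; concretely it equals the partial integral of $g_w$ over the variables in $w \setminus v$, and hence $P_v g_w = P_{v \cap w}\, g_w$. With this in hand I would split each $v \subseteq u$ as $v = a \cup b$ with $a = v \cap w \subseteq u \cap w$ and $b = v \setminus w \subseteq u \setminus w$, so that $|v| = |a| + |b|$ and $P_v g_w$ is insensitive to $b$. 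This lets me factor the alternating sum as
\[
	h_{u,w} = \Bigl( \sum_{a \subseteq u \cap w} (-1)^{|u|-|a|}\, P_a g_w \Bigr)
	\Bigl( \sum_{b \subseteq u \setminus w} (-1)^{|b|} \Bigr).
\]
The inner sum equals $(1-1)^{|u \setminus w|}$ by the binomial theorem, which vanishes precisely when $u \setminus w \neq \emptyset$, that is, when $u \not\subseteq w$. Thus $h_{u,w} = 0$ for $u \not\subseteq w$, so $f_u = \sum_{w \in W,\, u \subseteq w} h_{u,w}$, and for $u \notin \bar W$ this index set is empty, giving $f_u = 0$ as desired.

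The main obstacle I anticipate is purely the bookkeeping: one must track the sign exponents carefully and justify the factorization of the sum over $v \subseteq u$ into independent sums over $a$ and $b$. This factorization hinges entirely on the observation that $P_v g_w$ is unaffected by the part $b \subseteq u \setminus w$, which in turn rests on the unit measure of $\T$; once that is secured, the remaining cancellation is a routine inclusion--exclusion argument.
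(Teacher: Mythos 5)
Your proposal is correct and follows essentially the same route as the paper: both reduce to showing that the alternating sum $\sum_{v\subseteq u}(-1)^{|u|-|v|}P_v g_w$ vanishes when $u\not\subseteq w$, using $P_v g_w = P_{v\cap w}g_w$ and then factoring the sum over $v=a\cup b$ into a product whose second factor is $(1-1)^{|u\setminus w|}=0$. Your version even handles the case $u\cap w=\emptyset$ uniformly within the same factorization, whereas the paper treats it separately, but the underlying argument is identical.
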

	
	\begin{proof} Let $T_u$ denote the linear operator which maps a function $f$ to its ANOVA component $f_u$.
		Then we have for the function in \eqref{eins} that
		$$
		f_u = T_u f = \sum_{w \in W} T_u g_w.
		$$
		and it remains to show that for $w$ not containing $u$ it holds $T_u g_w = 0$. Let $u$ be not contained in $w$.
		Using \eqref{eq:impo} and the facts that $g_w=P_w g_w$ and $P_vP_u=P_{v\cap u}$, 
		we obtain
		\begin{align}
			T_u g_w &=\sum_{v \subseteq u} (-1)^{|u| - |v|} P_v g_w
			=\sum_{v \subseteq u} (-1)^{|u| - |v|} P_{v}P_{w} g_w
			= \sum_{v \subseteq u} (-1)^{|u| - |v|} P_{v \cap w} g_w.
		\end{align}
		If $v\cap w = \emptyset$, the assertion follows since $\sum_{v \subseteq u} (-1)^{|u| - |v|} = 0$.
		Otherwise, we get with $n\coloneqq |u\setminus w|>0$ that
		\begin{align}
			T_u g_w &=\sum_{v_1 \subseteq u\cap w}\sum_{v_2\subseteq u\setminus w} (-1)^{|u| - (|v_1|+ |v_2|)} P_{v_1} g_w\\
			&=\sum_{v_1 \subseteq u\cap w} (-1)^{|u| - |v_1|} P_{v_1} g_w \sum_{v_2\subseteq u\setminus w}(-1)^{|v_2|}\\
			&=\sum_{v_1 \subseteq u\cap w} (-1)^{|u| - |v_1|} P_{v_1} g_w \sum_{j=0}^{n} \binom{n}{j}(-1)^{j} = 0,
		\end{align}
		which gives the assertion.
	\end{proof}
	
	In real-world applications, it often appears that the decomposition \eqref{eq_spam}
	does not contain all subsets of $[d]$, but just a smaller amount of subsets 
	$U \subset \mathcal P([d])$ which have cardinality not larger than some $n \ll d$
	or that $f$ can be at least well approximated by a sparse ANOVA decomposition
	\begin{align}\label{eq_sparse}
		\sum_{u\subseteq U} f_u(x_u).
	\end{align}
	Several authors examined the reconstruction of functions 
	having such a sparse ANOVA approximation from values 
	$\left( t^i,f(t^i) \right)$, $i=1,\ldots,N$
	of $f$.
	The setting in this paper is different. 
	
	\begin{remark}[Setting of this paper] \label{rem:setting}
		We deal with non-negative functions 
		$f\colon \mathbb T^d \rightarrow \R$ on the $d$-dimensional torus $\mathbb T^d$ fulfilling
		$
		\|f\|_{L_1(\mathbb T^d)} = 1
		$
		and consider them as probability density functions of a certain random variable
		$X\colon \Omega \rightarrow \mathbb T^d$.
		Instead of sampled function values, we assume that we are
		given samples $x^i$, $i=1,\ldots,N$ of the distribution with density $f$, i.e.\ realizations of the
		random variable $X$. This means that in contrast to the $t^i$, the samples $x^i$ inherit the properties of $f$.
		If the $t^i$, $i=1,\ldots,N$ are uniformly sampled, then clearly $f(t^i)$ times $t^i$ may serve as samples,
		i.e., the $t^i$ must be weighted with the values $f(t^i)$.
	\end{remark}
	
	\paragraph{Sparse Mixture Models}	
	In this paper, we aim to find an approximation of $f \in L_1(\mathbb T^d)$ by a mixture model 
	from samples of the corresponding distribution. 
	For an introduction to mixture models we refer to \cite{MP2000}.
	To this end, let $\Delta_K \coloneqq \{\alpha \in \mathbb R^K_{\ge 0} : \alpha^\tT 1_K = 1\}$
	with the vector $1_K$ consisting of $K$ entries 1, be the probability simplex, and
	$\SPD(d)$ the cone of symmetric, positive definite matrices.
	Assume that $f$ can be approximated by
	\emph{mixture models} of the form
	\begin{equation} \label{eq:sMM_model}
		p(x|\alpha,\vartheta) =\sum_{k=1}^K \alpha_k p_{u_k} (x_{u_k}|\vartheta_k), 
	\end{equation}
	where $u_k \in U \subset \mathcal P([d])$,
	$\alpha = (\alpha_k)_{k=1}^K \in \Delta_K$,  $\vartheta = (\vartheta_k)_{k=1}^K$ and 
	$p_{u_k}$ is a probability density functions on $\T^{n}$,  $n= |u_k|$. Note that the index sets $u_k$ are in general not pairwise different, i.e. $u_k = u_l$ can appear for $k \not = l$.
	If $U$ contains only sets of small cardinality, we call \eqref{eq:sMM_model} a \emph{sparse mixture model}.
	Indeed, the density $p_{u}$ determines the distribution 
	of a $\T^d$-valued random variable $X=(X_u,X_{u^c})$ characterized by
	$$
	(X_u,X_{u^c})\sim p_u(\cdot|\vartheta)\times \mathcal U_{\T^{d-|u|}}.
	$$
	This class of distributions includes for $u=\emptyset$ the uniform distribution on $\T^d$. 
	
	In this paper, we need the (absolutely continuous) \emph{normal or Gaussian distribution} on $\mathbb R^n$
	having the density function
	\begin{equation}\label{density_normal}
		\mathcal N(x|\mu,\Sigma) = (2\pi)^{-\frac{n }{2}} \abs{\Sigma}^{-\frac{1}{2}} 
		\,\exp\left(-\frac{1}{2}(x-\mu)^\tT \Sigma^{-1}(x-\mu) \right)
	\end{equation}  
	with mean $\mu\in \R^n$ and $\Sigma\in \SPD(n)$. This distribution has many characterizing properties
	which unfortunately  cannot be transferred to a ,,normal distribution'' on manifolds, see, e.g. \cite{Laus2019}.
	In this paper, we restrict our attention to the normal-like distributions $p_{u_k}$ on the $n = |u_k|$ dimensional $\T^n$ listed in the following example.
	
	\begin{example} \label{ex:distr}
		We focus on mixture models on $\T^d$ with low dimensional components 
		from one of the following distributions on $\T^n$, $n \ll d$:
		\begin{itemize}
			\item[i)] the \emph{wrapped normal distribution} 
			$$
			p_{G}(x|\mu,\Sigma)=\sum_{l\in\Z^{n}}\mathcal N(x+l|\mu,\Sigma) = \mathcal N_w(x|\mu,\Sigma),
			$$
			where $\mu \in \T^n$, $\Sigma \in \SPD(n)$. Note that $\wN(\mu,\Sigma)$ is characterized by the distribution of 
			$X-\lfloor X\rfloor$, where $X\sim\mathcal N(\mu,\Sigma)$.
			This formula allows us, to draw easily samples from $\wN(\mu,\Sigma)$.
			\item[ii)] the \emph{diagonal wrapped normal distribution}
			$$
			p_{dG}(x|\mu,\sigma^2)=p_{G}(x|\mu,\diag(\sigma^2)) = \sum_{l\in\Z^{n}} \prod_{j=1}^n \mathcal N (x_j+l_j|\mu_j,\sigma_j^2) 
			= 
			\prod_{j=1}^n \wN(x_j|\mu_j,\sigma_j^2),
			$$
			where ($\mathcal N_w$) $\mathcal N$ is the univariate (wrapped) Gaussian density function and $\sigma^2\in\R^n_{>0}$.
			\item[iii)] the \emph{von Mises distribution} on $\T^n$ with parameters $\mu \in \mathbb \T^n$ 
			and $\kappa \in \mathbb R^n_{>0}$ is the restriction of the probability density function of an isotropic 
			normal distribution to the unit circle and has the probability density function
			\begin{align} \label{eq_vM_density}
				p_M (x|\mu,\kappa)= \prod_{j=1}^n \tfrac{1}{I_0(\kappa_j)}\exp \left(\kappa_j \cos \left(2\pi(x_j-\mu_j) \right) \right) ,
			\end{align}
			where $I_0$ is the \emph{modified Bessel function of first kind of order $0$}. \hfill $\Box$
		\end{itemize}
	\end{example}
	
	The wrapped normal distribution inherits by definition several properties of the normal distribution in $\R^n$. 
	For example, we obtain directly 
	for independent $X\sim\wN(\mu,\Sigma)$, $Y\sim\wN(\mu',\Sigma')$ that $X+Y\sim\wN(\mu+\mu',\Sigma+\Sigma')$. 
	Similarly, we get that any marginal of $X$ is again a wrapped normal distribution.
	Other properties of the normal distribution are not transferred to the wrapped case. 
	For example, it holds on a circle that the von Mises distribution maximizes 
	the entropy and not the wrapped normal distribution, see \cite{JS2001}. 
	Indeed the von Mises distribution with parameters $(\mu,\kappa)$ is very similar to the one-dimensional 
	wrapped normal distribution with parameters $(\mu,\sigma^2)$, where the parameters are related via
	$\frac{I_1(\kappa)}{I_0(\kappa)}=\exp(-\tfrac{(2\pi)^2\sigma^2}{2})$, see \cite{K1974}.
	Thus, the von Mises distribution is often used instead of the wrapped normal distributions 
	with the benefit of a reduced complexity for evaluating the density function and estimating the parameters, 
	see e.g.\ \cite{B1989,F1987,KGH2014}.
	Unfortunately, there is no multivariate counterpart for this approximation.
	Finally, we like to mention that there also exist extensions 
	of the von Mises distribution to the (non tensor) multivariate case on $\T^d$, see \cite{M2010,MHTS2008,MTS2007}. 
	Unfortunately, the normalization constants of these multivariate von Mises distributions 
	have in general no closed form and the numerical approximation is very expansive.

	The following remark highlights the difference of our approach to another kind of ,,sparse''
	mixture models in the literature.
	
	\begin{remark} [An ,,opposite'' sparse mixture model]\label{rem:opposite}
		In a Gaussian setting, our sparse mixture model replaces the 
		inverse covariance matrices $\Sigma^{-1}$ in the summands of the mixture model
		by special matrices of low rank $|u|$. 
		This is opposite to replacing the covariance matrices $\Sigma$ themselves by low rank matrices
		as done in, e.g., \cite{GBKT2020,STA2021}. In other words, our paper addresses sparsity in the time domain, while
		the other authors consider the Fourier domain.
		
		In \cite{HNABBSS2020}, see also \cite{BGS2006,HBD2018},
		the authors considered so-called PCA-GMMs. 
		These are Gaussian mixture models, 
		where, up to a rotations, the summand densities are associated with random variables distributed as
		$$
		(X_u,X_{u^c})\sim \mathcal N(\mu,\Sigma)\times \mathcal N(0,\sigma^2 I)
		$$
		where $\sigma^2>0$ is a \emph{small}    fixed parameter which may account for noise in the data.
		Now, wrapping $X$ around the $d$-dimensional torus yields that $Y=X-\lfloor X\rfloor$ is distributed as
		$$
		(Y_u,Y_{u^c})\sim \wN(\mu,\Sigma)\times\wN(0,\sigma^2I).
		$$
		For $\sigma^2\to\infty$ this distribution converges  to
		$$
		\wN(\mu,\Sigma)\times \mathcal U_{\T^{d-|u|}},
		$$
		which is exactly how the components of our model \eqref{eq:sMM_model} are defined.
		Thus, in contrast to the PCA-GMM model, we have $\sigma^2\to\infty$ instead of a small or vanishing $\sigma$.
		\hfill $\Box$
	\end{remark}
	
	Finally, we are interested in the ANOVA decomposition of our mixture models.
	To this end, we consider functions $h_u(\cdot|\vartheta)\colon\T^{|u|}\to\R$ 
	depending on $u\subseteq[d]$ 
	and $\vartheta\in\Theta_u$ 
	for some parameter space $\Theta_u$.
	We say that a family of probability density functions
	$\mathcal H= \{ h_u(\cdot|\vartheta): u\subseteq[d],\, \vartheta\in\Theta_u\}$ 
	is \emph{closed under projections}, 
	if for any $u,v\in[d]$, $\vartheta\in\Theta_u$ 
	there exists $\tilde\vartheta\in\Theta_{v\cap u}$ such that
	$$
	h_{v\cap u}(x_{v\cap u}|\tilde \vartheta)=P_{v\cap u} h_u(\cdot|\vartheta).
	$$
	In other words, marginals of functions in $\mathcal H$ have the same form.
	As already mentioned the family of wrapped Gaussians is closed under projection.
	Clearly this holds also true for families of direct products of univariate distributions.
	
	Further, the family $\mathcal H$ is called \emph{identifiable}, 
	if its elements are linearly independent in the vector space of all functions on $\T^d$, 
	i.e.\ for all $K \in \mathbb N$ and $\alpha_1,...,\alpha_K\in\R$ 
	it holds that 
	$\sum_{k=1}^K\alpha_k h_{u_k}(x_{u_k}|
	\vartheta_{u_k})=0$ implies $\alpha_k=0$ for all $k =1,\ldots,K$, see \cite{Teicher1961,YS1968}.
	It is known that the multivariate Gaussian family on $\mathbb R^d$ is identifiable \cite{DD2020,YS1968}.
	Further the univariate wrapped normal distribution \cite{HMS2004}  and the von Mises distribution 
	on $\T$ are identifiable \cite{FHW1981}. By \cite{Teicher1967}, also diagonal wrapped normal distributions in ii) and
	the products of von Mises distributions in iii) are identifiable. If the wrapped normal distribution on $\T^d$ 
	in i) is identifiable appears to be an open problem.
	
	Then we have the following proposition on the ANOVA decomposition of mixture models.
	
	\begin{proposition}\label{lem:anoo}
		Let $W \subseteq \mathcal P([d])$ and 
		$\mathcal H= \{ h_u(\cdot|\vartheta): u\subseteq[d],\, \vartheta\in\Theta_u\}$ 
		be an identifiable family of probability density functions 
		which is  closed under projections. 
		Further, let $g_w$, $w \in W$ be the linear combination of functions from $\{ h_w(\cdot|\tilde{\vartheta}_l): \tilde{\vartheta}_l \in \Theta_w\}$
		with positive coefficients. 
		Then, a function $f$ of the form
		$$
		f(x) = \sum_{w \in W} g_w(x_w)
		$$
		has the ANOVA decomposition
		$$
		f = \sum_{u \in \bar W} f_u
		$$
		with $f_u \neq 0$ for all $u \in \bar W$, where $\bar W$ denotes the set $\{v\subseteq w: w\in W\}$.             
	\end{proposition}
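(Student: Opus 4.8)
The plan is to build directly on Proposition \ref{lem:ano}. From its proof we already know that $T_u g_w = 0$ whenever $u \not\subseteq w$, so for every $u \in \bar W$ we have
$$
f_u = T_u f = \sum_{\substack{w \in W \\ u \subseteq w}} T_u g_w = \sum_{\substack{w \in W \\ u \subseteq w}} \sum_{v \subseteq u} (-1)^{|u|-|v|} P_v g_w,
$$
where the outer sum is nonempty precisely because $u \in \bar W$ guarantees the existence of some $w \in W$ with $u \subseteq w$. The new content beyond Proposition \ref{lem:ano} is the non-vanishing $f_u \neq 0$, so the whole argument reduces to showing that this finite combination is not the zero function.

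First I would rewrite each summand in terms of the family $\mathcal H$. Writing $g_w = \sum_l c_l^{(w)} h_w(\cdot|\tilde\vartheta_l)$ with $c_l^{(w)} > 0$ and using that $v \subseteq u \subseteq w$ implies $v \cap w = v$, closure under projections yields $P_v g_w = \sum_l c_l^{(w)} h_v(\cdot|\hat\vartheta_l)$, again a combination with strictly positive coefficients. Hence $f_u$ becomes a linear combination of functions $h_v(x_v|\cdot)$ with $v \subseteq u$, and I would single out the \emph{top-level} part corresponding to $v = u$, namely $\sum_{w \supseteq u} P_u g_w$, in which every $h_u$-function carries the sign $(-1)^{|u|-|u|} = +1$ together with a strictly positive coefficient $c_l^{(w)}$.

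Then I would invoke identifiability. Since no term with $v \subsetneq u$ can produce a function of the form $h_u(x_u|\cdot)$, the $h_u$-level coefficients in the expansion of $f_u$ are exactly the numbers $c_l^{(w)}$ (merged whenever parameters coincide), all strictly positive. If $f_u$ were the zero function, identifiability of $\mathcal H$ would force \emph{every} coefficient in its expansion over $\mathcal H$ to vanish; in particular the positive top-level coefficients would have to be zero, a contradiction. Therefore $f_u \neq 0$ for all $u \in \bar W$.

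The main obstacle I anticipate is the bookkeeping of cancellations. One must be sure that the $v = u$ contributions arising from the different $w \supseteq u$ genuinely accumulate rather than cancel, which is exactly where the positivity of the coefficients of $g_w$ is indispensable, and that no lower-order term with $v \subsetneq u$ secretly contributes at the $h_u$-level, which is where closure under projections and identifiability jointly enter. Once these two points are pinned down, the linear independence of $\mathcal H$ closes the argument immediately.
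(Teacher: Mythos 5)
Your proposal is correct and follows essentially the same route as the paper's proof: both isolate the top-level contribution $\sum_{w\supseteq u} P_u g_w$, which by closure under projections is a combination of $h_u(\cdot|\vartheta_k)$ with strictly positive (merged) coefficients, push all remaining terms into $\lin\{h_v(\cdot|\vartheta): v\subsetneq u\}$, and conclude via identifiability that $f_u=0$ would force these positive coefficients to vanish. The only cosmetic difference is that you start from $f_u=\sum_{w\supseteq u}T_u g_w$ using the vanishing result already established in Proposition \ref{lem:ano}, while the paper expands $f_u = P_u f + \sum_{v\subsetneq u}(-1)^{|u|-|v|}P_v f$ and splits $P_u f$ according to whether $u\subseteq w$; the resulting decomposition and the use of identifiability are identical.
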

	
	\begin{proof}
		By Proposition \ref{lem:ano} we know already that
		$$
		f  = \sum_{u \in \bar W} f_u 
		$$
		so that it remains to show that none of these summands vanishes.
		Assume in contrary that there exists
		$u \in \bar W$ such that $f_u=0$.
		By \eqref{eq:impo} we have 
		\begin{align}\label{to_zero}
			0=f_u= P_u f + \sum_{v \subsetneq u} (-1)^{|u| - |v|} P_v f.
		\end{align}
		Since $\mathcal H$ is closed under projection and the $g_w$ are positive linear combinations 
		of functions from $\{h_w(\cdot|\vartheta): \vartheta \in \Theta_w\}$, we have $g_w=\sum_{l=1}^{K_l} \alpha_{w,l}h_w(\cdot|\tilde\vartheta_l)$, $\alpha_{w,l}>0$ and therefore
		\begin{align}
			P_u f&=\sum_{w\in W} P_u g_w 
			=\sum_{w\in W,u\subseteq w} P_u g_w + \sum_{w\in W, u\not\subseteq w} P_u g_w\\
			&= \sum_{w\in W,u\subseteq w} \sum_{l=1}^{K_l} \alpha_{w,l} P_u (h_w(\cdot,\tilde\vartheta_l)) + f_1\\
			&=\sum_{k=1}^{K} \alpha_k h_u(\cdot|\vartheta_k) + f_1
		\end{align}
		for some $K\in\N$, positive coefficients $\alpha_k\in\R_{>0}$, 
		$f_1\in\lin\{h_v(\cdot|\vartheta):v\subsetneq u, \, \vartheta \in \Theta_v\}$ 
		and pairwise distinct $\vartheta_k$, $k=1,...,K$.
		Since $u \in \bar W$, we have that $K>0$.
		Further, it holds for $v\subsetneq u$ that
		$$
		P_v f=\sum_{w\in W}P_v g_w=f_2 \in \lin\{ h_{\tilde v}(\cdot|\vartheta): 
		\tilde v\subsetneq u, \vartheta \in \Theta_{\tilde v} \}.
		$$
		Putting the last two formulas together, we obtain in \eqref{to_zero} that
		$$
		0=\sum_{k=1}^{K} \alpha_k  h_u(\cdot|\vartheta_k)+f_3
		$$
		where $f_3 \in\lin\{h_v(\cdot|\vartheta):v\subsetneq u, \, \vartheta \in \Theta_v\}$. 
		Now the identifiability of $\mathcal H$ yields that $\alpha_k=0$ for all $k=1,...,K$, which is a contradiction.
	\end{proof}
	
	\section{Learning Sparse Mixture Models} \label{sec:spamm}
	Our approach for learning a sparse mixture model consists of three
	items. First, we need a rough approximation of the involved index sets $u_k$ 
	which is done in Subsection \ref{sec:find_u}.
	Then we consider an objective function consisting of
	the log-likelihood of the corresponding mixture model and an additional term that penalizes too many summands and supports further sparsity of the mixture model.
	To minimize this objective function we propose a combination of a proximal step and the EM algorithm.
	The proximal step is considered in Subsection \ref{sec:prox} and 
	the EM algorithm in Subsection \ref{sec:EM}.
	
	Let $N$ observations $\mathcal X= (x^1,...,x^N) \in \R^{d,N}$      
	with non-negative real-valued weights $\mathcal W = (w_1,...,w_N)$ be given.
	For simplicity, we assume that $\sum_{i=1}^N w_i=N$.
	Then the weighted negative log-likelihood function of the mixture model \eqref{eq:sMM_model}
	is given by
	\begin{align}\label{eq_likelihood}
		\mathcal L(\alpha,\vartheta|\mathcal X)=-\sum_{i=1}^N w_i\log\Big(\sum_{k=1}^K\alpha_k p_{u_k} (x_{u_k}^i|\vartheta_k)\Big).
	\end{align}
	Since we intend to get a sparse mixture model, we propose to minimize instead of $\mathcal L$ the penalized function
	\begin{align}\label{eq_likelihood_plus_0}
		\mathcal L_\lambda(\alpha,\vartheta)
		\coloneqq \mathcal L(\alpha,\vartheta|\mathcal X)
		+ \lambda \|\alpha\|_0 + \iota_{\Delta_K} (\alpha), \quad \lambda > 0
	\end{align}
	with  the zero ,,norm''
	$\|\alpha\|_0 \coloneqq |\{k: \alpha_k >0\}|$ 
	and 
	the indicator function
	$\iota_{\Delta_K}(\alpha) = 0$ if $\alpha \in \Delta_K$ and 
	$\iota_{\Delta_K}(\alpha) = + \infty$ otherwise.
	Here we suppose that the $u_k\subseteq[d]$, $k=1,...,K$ are fixed. 
	In Section \ref{sec:find_u}, we will suggest a heuristic for determining  appropriate sets $u_k$.
	We propose to minimize \eqref{eq_likelihood_plus_0} by alternating between
	the EM steps for $\mathcal L$ and a proximity step for the function
	\begin{equation} \label{prox}
		h(\alpha) \coloneqq \|\alpha\|_0 + \iota_{\Delta_K} (\alpha).
	\end{equation}
	More precisely, we will iterate
	\begin{align} \label{a1}
		(\alpha^{(r+\tfrac12)}, \vartheta^{(r+1)}) &= \mathrm{EM} (\alpha^{(r)}, \vartheta^{(r)}),\\
		\alpha^{(r+1)} &= \prox_{\gamma h}(\alpha^{(r+\tfrac12)}), \quad \gamma > 0, \label{a2}
	\end{align}
	where $\prox_{\gamma h}(\cdot)$ is defined according to~\eqref{def:prox_operator}.
	
	In the following subsection, we consider the  proximity step
	before we explain the EM algorithm for our mixture models with components from Example \ref{ex:distr}.

	\subsection{Proximal Algorithm} \label{sec:prox}
	For a proper, lower semi-continuous function $g\colon \R^K \rightarrow \R\cup \{+\infty\}$ 
	and $\gamma >0$, 
	the \emph{proximal operator}
	$\prox_{\gamma g}$ is defined by 
	\begin{equation}\label{def:prox_operator}
		\prox_{\gamma g} (x) \coloneqq \argmin_{y\in\R^d} \{ \tfrac1{2\gamma} \|x-y\|^2+g(y)\}.
	\end{equation}
	Note, that for a non-convex function $g$ the $\argmin$ is not necessarily single-valued, 
	such that the proximal operator is set-valued. 
	For the non-convex function $g$ in \eqref{prox},
	we can compute a proximum using the following lemma.
	
	\begin{lemma}\label{lem_prox_0}
		Let $\alpha\in\Delta_K$ and assume without loss of generality 
		that $\alpha_1\leq\cdots\leq\alpha_K$. Let $h$ be defined by \eqref{prox}.
		Then the following holds true.
		\begin{enumerate}[i)]
			\item  An element
			$$
			\hat\alpha\in\prox_{\gamma h}(\alpha)
			$$
			is given by $\hat\alpha_J=0$ and 
			$$
			\hat\alpha_{J^c}=\alpha_{J^c}+\frac{1}{|J^c|}\sum_{k\in J} \alpha_k,
			$$
			where $J=[K_0]$ and $J^c$ is defined by $J^c=[K]\backslash J$ and
			\begin{align}
				K_0\in\argmin_{n\in\{0,...,K-1\}}g(n),\quad g(n)=\frac{1}{2\gamma}\frac{\Big(\sum_{k=1}^n \alpha_k\Big)^2}{K-n}+\frac{1}{2\gamma}\sum_{k=1}^n\alpha_k^2-n.\label{eq_to_mini_K_0}
			\end{align}
			\item 
			Assume that $\alpha_i=0$ for some $i\in\{1,...,K\}$. Then it holds for any $\hat \alpha \in\prox_{\gamma h}(\alpha)$ that $\hat \alpha_i=0$.
		\end{enumerate}
	\end{lemma}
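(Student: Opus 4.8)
The plan is to unfold the definition \eqref{def:prox_operator} of $\prox_{\gamma h}$ and to split the minimization over $\Delta_K$ according to the support of the candidate $\beta$. For a fixed index set $S \subseteq [K]$ of admissible nonzero entries, i.e.\ $\beta_k = 0$ for $k \notin S$, the term $\|\beta\|_0$ equals $|S|$ and is therefore constant, so that on this piece of the feasible set the minimization of $\frac{1}{2\gamma}\|\alpha - \beta\|^2 + \|\beta\|_0$ reduces to the Euclidean projection of $\alpha_S$ onto the restricted simplex $\{\beta_S \ge 0,\ \sum_{k\in S}\beta_k = 1\}$. The whole problem then amounts to choosing the best support $S$, equivalently the best zeroed set $J = S^c$, and afterwards to comparing the finitely many resulting values.

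First I would compute the restricted projection in closed form. Projecting $\alpha_S$ onto the affine hyperplane $\sum_{k\in S}\beta_k = 1$ yields $\hat\beta_k = \alpha_k + \frac{1}{|S|}\bigl(1 - \sum_{j\in S}\alpha_j\bigr)$ for $k \in S$, and since $\alpha \in \Delta_K$ gives $1 - \sum_{j\in S}\alpha_j = \sum_{j\in J}\alpha_j \ge 0$, the correction term is non-negative; hence $\hat\beta_k \ge \alpha_k \ge 0$, the non-negativity constraints are inactive, and the hyperplane projection already \emph{is} the simplex projection. This is precisely the formula claimed for $\hat\alpha_{J^c}$. Substituting $\hat\beta$ back, the squared distance splits into the contributions $\alpha_k^2$ for $k\in J$ and $\bigl(\frac{1}{|J^c|}\sum_{j\in J}\alpha_j\bigr)^2$ for each $k\in J^c$; adding $\|\hat\beta\|_0 = |J^c| = K - |J|$ produces, up to the additive constant $K$, exactly the function $g$ in \eqref{eq_to_mini_K_0} evaluated at $n = |J|$.

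Next I would reduce the combinatorial choice of $J$ to the scalar index $n = |J|$. For a fixed cardinality $n$, the two quantities $\sum_{k\in J}\alpha_k^2$ and $\sum_{k\in J}\alpha_k$ entering $g$ are sums of $n$ elements of $\{\alpha_k^2\}$ and $\{\alpha_k\}$, respectively; since the $\alpha_k$ are non-negative and sorted increasingly, so are the $\alpha_k^2$, and both sums are simultaneously minimized by $J = [n]$. Hence an optimal zeroed set is $J = [K_0]$ with $K_0 \in \argmin_{n\in\{0,\dots,K-1\}} g(n)$, where the upper bound $K-1$ reflects that $S = J^c$ must be nonempty because $\beta$ sums to one. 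Dropping the constant $K$ leaves the minimizer unchanged and yields assertion i).

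The subtler part is ii), and here I would argue by a cardinality-preserving \emph{swap}. Suppose $\hat\alpha$ is a proximum with $\alpha_i = 0$ but $\hat\alpha_i > 0$, and let $S$ be its support. By the closed form above, $\hat\alpha_i$ equals the common correction constant $c = \frac{1}{|S|}\sum_{j\notin S}\alpha_j$, so $\hat\alpha_i > 0$ forces $\sum_{j\notin S}\alpha_j > 0$, i.e.\ there is an index $j \notin S$ with $\alpha_j > 0$. Replacing $S$ by $S' = (S\setminus\{i\})\cup\{j\}$ keeps the cardinality, hence leaves $\|\cdot\|_0$ unchanged, while in the zeroed set it exchanges $\alpha_i^2 = 0$ for $\alpha_j^2 > 0$ and $\alpha_i = 0$ for $\alpha_j > 0$; by the objective formula both $\sum_{k\in J}\alpha_k^2$ and $\sum_{k\in J}\alpha_k$ strictly decrease, so the projection onto the simplex restricted to $S'$ attains a strictly smaller objective value than $\hat\alpha$, contradicting optimality. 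The main obstacle I anticipate is exactly this step: a naive perturbation that merely deletes the entry $i$ changes the cardinality and need not improve the objective, so one genuinely has to use the swap and verify that both distance contributions drop; the non-negativity bookkeeping in the projection (ensuring $c \ge 0$, so that zeroing never creates negative entries and an entry on $S$ can only vanish in the harmless case $\alpha_k = c = 0$) is the other place where care is required.
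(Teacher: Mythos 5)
Your proof is correct and follows essentially the same route as the paper: fix the support, observe that the restricted problem is a simplex projection whose non-negativity constraints are inactive because $\alpha\in\Delta_K$, reduce the choice of the zeroed set to a prefix $[n]$ via the sorting, and prove ii) by a cardinality-preserving swap. The only (harmless) deviation is in ii), where you extract the swap partner $j$ directly from the closed-form correction constant and compare optimal values over supports, whereas the paper first handles separately the case that no such index exists and then constructs an explicit competitor $\tilde\alpha$; both arguments yield the same contradiction.
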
 
	
	\begin{proof}
		First we note that for $\hat\alpha\in\prox_{\gamma h}(\alpha)$ 
		it holds 
		$$
		\hat\alpha\in\argmin_{y\in\Delta_K}\{\tfrac1{2\gamma}\|\alpha-y\|^2+\|y\|_0\}.
		$$
		With $J\coloneqq\{k\in[K]:\hat\alpha_k=0\}$, this can be rewritten as
		$$
		\hat\alpha_{J^c}\in\argmin_{y\in\Delta_{|J^{c}|}}\{\tfrac1{2\gamma}\|\alpha_{J^c}-y\|^2\},
		$$
		which is the orthogonal projection of $\alpha_{J^c}$ onto $\Delta_{|J^c|}$. Since $\alpha_k\geq0$ for all $k$ and $\|\alpha_{J^c}\|_1\leq1$, this projection is given by 
		$$
		\hat\alpha_{J^c}=\alpha_{J^c}+\frac{1}{|J^c|}\sum_{k\in J} \alpha_k.
		$$
		In particular, we have that 
		$$
		\|\hat\alpha-\alpha\|^2=|J^c|\bigg(\frac{\sum_{k\in J} \alpha_k}{|J^c|}\bigg)^2+\sum_{k\in J}\alpha_k^2=\frac{\Big(\sum_{k\in J} \alpha_k\Big)^2}{|J^c|}+\sum_{k\in J}\alpha_k^2.
		$$
		\begin{enumerate}[(a)]
			\item By the previous calculations, the definition of proximal operators and the fact that $J\subsetneq [K]$ (otherwise $\hat\alpha=0\not\in\Delta_K$), it is left to show that $J$ given by \eqref{eq_to_mini_K_0} is a minimizer of 
			\begin{align} 
				\min_{J\subsetneq[K]}\frac{1}{2\gamma}\frac{\Big(\sum_{k\in J} \alpha_k\Big)^2}{|J^c|}+\frac{1}{2\gamma}\sum_{k\in J}\alpha_k^2+|J^c|.\label{eq_to_mini_J}
			\end{align}
			Due to the monotonicity of \eqref{eq_to_mini_J} in $\alpha_k$, $k\in J$ and $\alpha_1\leq\cdots\leq\alpha_K$, there exists a minimizer of the form $J=[K_0]$ for
			\begin{align} 
				K_0\in\argmin_{n\in\{0,...,K-1\}}g(n),\quad g(n)=\frac{1}{2\gamma}\frac{\Big(\sum_{k=1}^n \alpha_k\Big)^2}{K-n}+\frac{1}{2\gamma}\sum_{k=1}^n\alpha_k^2-n.
			\end{align}
			\item Let $\alpha_k=0$ and assume that $\hat\alpha_k>0$, where $\hat \alpha_k\in\prox_{\gamma(\|\cdot\|_0+\iota_{\Delta_K})}(\alpha)$.
			If there exists no $l\in[K]$ with $\alpha_l>0$ and $\hat\alpha_l=0$, then it holds
			$$
			\frac{1}{2\gamma}\|\hat\alpha-\alpha\|^2+\|\hat\alpha\|_0>\|\alpha\|_0,
			$$
			which is a contradiction to the definition of the proximal operator. 
			Thus, we can assume that such an $l\in[K]$ exists with $\alpha_l > 0$, $\hat \alpha_l = 0$.
			Now, define $\tilde\alpha$ with $\tilde \alpha_k=0$, $\tilde \alpha_l=\hat\alpha_k$ and $\tilde\alpha_j=\hat\alpha_j$ for $j\in[K]\backslash\{k,l\}$. Then it holds by the definition of the proximal operator that
			\begin{align}
				0\geq&\frac{1}{2\gamma}\|\hat\alpha-\alpha\|^2+\|\hat\alpha\|_0-\frac{1}{2\gamma}\|\tilde\alpha-\alpha\|^2-\|\tilde\alpha\|_0\\
				=&\frac{1}{2\gamma}\big((\hat\alpha_k)^2+(\alpha_l)^2-(\tilde\alpha_l-\alpha_l)^2\big)\\
				=&\frac{1}{2\gamma}\big((\hat\alpha_k)^2+(\alpha_l)^2-(\hat\alpha_k-\alpha_l)^2\big).
			\end{align}
			Since $\hat\alpha_k,\alpha_l\in(0,1]$ we have that $|\hat\alpha_k-\alpha_l|<\max(\hat\alpha_k,\alpha_l)$, which implies that the right hand side of the above equation is strictly greater than $0$. This is a contradiction and the proof is completed.
		\end{enumerate}
	\end{proof}
	
	\begin{remark}
		Since sorting the components of $\alpha$ can be done in $\mathcal O(K\log(K))$, 
		the lemma implies, that we can compute an element of 
		$\prox_{\gamma h}(\alpha)$ in $\mathcal O(K \log(K))$. 
		In particular, the computation of the $\prox$ step is very cheap compared with the EM-step.
	\end{remark}
	
	\subsection{EM Algorithm} \label{sec:EM}
	For minimizing the negative log-likelihood function 
	\eqref{eq_likelihood} we will apply an
	EM algorithm, see \cite{Byrne2017,DLR1977} and 
	for a good brief introduction also \cite{Laus2019}.
	We need two different variants of this algorithm, namely 
	for products of von Mises distributions and the wrapped Gaussians.
	
	Let $X^1,...,X^N$ be i.i.d.\ random variables distributed according to $p_X(\cdot|\alpha,\vartheta)$ and $\mathbf{X} = (X^1,\ldots,X^N)$.
	Given a realization $\mathcal X= (x^1,...,x^N) \in  \R^{d,N}$ of $\mathbf{X}$,
	the common idea of the EM algorithm for finding a maximizer of the log-likelihood function
	$$
	\mathcal{L}(\alpha,\vartheta|\mathcal X) 
	= \prod_{i=1}^N p_{X}(x^i|\alpha,\vartheta) 
	$$
	is to introduce an artificial, hidden random variable $Z$
	and to perform the following two steps:
	\\[1ex]
	\textbf{E-Step:} For a fixed estimate $(\alpha^{(r)},\vartheta^{(r)})$ of $(\alpha,\vartheta)$, 
	we approximate the log-likelihood function\\ $\log \left(p_{\mathbf X,Z}(\mathcal X,\mathcal Z|\alpha,\vartheta) \right)$ 
	of the unknown joint realization $(\mathcal X,\mathcal Z)$ by the so-called $Q$-function
	$$
	Q\left( (\alpha,\vartheta),(\alpha^{(r)}, \vartheta^{(r)}) \right) 
	= 
	\E_{(\alpha^{(r)}, \vartheta^{(r)})} \left(\log\left(p_{\mathbf X,Z} (\mathbf X,Z|\alpha, \vartheta) \right)| \mathbf X = \mathcal X \right),
	$$
	where the expectation value is taken with respect to the probability distribution 
	associated with the mixture model $p(\cdot|\alpha^{(r)}, \vartheta^{(r)})$.
	\\
	\textbf{M-Step:} Update $\vartheta$ by maximizing the $Q$-function 
	$$
	\left( \alpha^{(r+1)},\vartheta^{(r+1)} \right) \in\argmax_{\alpha,\vartheta\in\Delta_K \times \Theta}\{Q\left( (\alpha,\vartheta),(\alpha^{(r)},\vartheta^{(r)}) \right)\}.
	$$
	
	A convergence analysis of the EM algorithm via Kullback-Leibler proximal point algorithms 
	was given in \cite{CH2000, CH2008,Laus2019}.
	These convergence results apply also for our special mixture models in the following paragraphs.
	
	\begin{proposition}\label{cor_conv_EM}
		Let the sequence $(\alpha^{(r)},\vartheta^{(r)})_r$ be generated by the above EM steps. 
		Then, the sequence of negative log-likelihood values $\mathcal L(\alpha^{(r)},\vartheta^{(r)}|\mathcal X)$ is decreasing.
	\end{proposition}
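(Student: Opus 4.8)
The plan is to reproduce the classical EM monotonicity argument, adapted to the present notation in which $\mathcal L$ denotes the \emph{negative} log-likelihood; showing that $\mathcal L$ decreases is therefore equivalent to showing that the weighted log-likelihood $\ell(\alpha,\vartheta)\coloneqq\sum_{i=1}^N w_i\log p(x^i|\alpha,\vartheta)=-\mathcal L(\alpha,\vartheta|\mathcal X)$ increases along the iterates. Writing $\theta\coloneqq(\alpha,\vartheta)$ and $\theta^{(r)}$ for the current estimate, I would start from the pointwise factorization of the complete-data density, $\log p_{\mathbf X,Z}(\mathcal X,\mathcal Z|\theta)=\log p_{\mathbf X}(\mathcal X|\theta)+\log p_{Z|\mathbf X}(\mathcal Z|\mathbf X=\mathcal X,\theta)$, which merely splits the joint density into the observed-data density and the conditional density of the latent variable. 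Taking the conditional expectation $\E_{\theta^{(r)}}(\,\cdot\,|\mathbf X=\mathcal X)$, observing that $\log p_{\mathbf X}(\mathcal X|\theta)$ is independent of $Z$, and summing over the samples with the nonnegative weights $w_i$ yields the fundamental decomposition
$$
\ell(\theta)=Q(\theta,\theta^{(r)})-H(\theta,\theta^{(r)}),
$$
where $Q$ is the $Q$-function of the E-step and $H(\theta,\theta^{(r)})\coloneqq\E_{\theta^{(r)}}(\log p_{Z|\mathbf X}(Z|\mathbf X=\mathcal X,\theta)|\mathbf X=\mathcal X)$, the weights entering both terms as common nonnegative factors.

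Next I would write the increment between two consecutive iterates as
$$
\ell(\theta^{(r+1)})-\ell(\theta^{(r)})=\big(Q(\theta^{(r+1)},\theta^{(r)})-Q(\theta^{(r)},\theta^{(r)})\big)+\big(H(\theta^{(r)},\theta^{(r)})-H(\theta^{(r+1)},\theta^{(r)})\big)
$$
and bound the two brackets separately. The first bracket is nonnegative because the M-step defines $\theta^{(r+1)}$ as a maximizer of $Q(\cdot,\theta^{(r)})$, so in particular $Q(\theta^{(r+1)},\theta^{(r)})\geq Q(\theta^{(r)},\theta^{(r)})$. For the second bracket I would invoke Gibbs' inequality: the difference equals a Kullback--Leibler divergence,
$$
H(\theta^{(r)},\theta^{(r)})-H(\theta,\theta^{(r)})=\mathrm{KL}\big(p_{Z|\mathbf X}(\cdot|\mathcal X,\theta^{(r)})\,\big\|\,p_{Z|\mathbf X}(\cdot|\mathcal X,\theta)\big)\geq 0,
$$
which follows from the concavity of the logarithm (Jensen's inequality) and holds in particular for $\theta=\theta^{(r+1)}$. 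Adding the two nonnegative contributions gives $\ell(\theta^{(r+1)})\geq\ell(\theta^{(r)})$, that is $\mathcal L(\alpha^{(r+1)},\vartheta^{(r+1)}|\mathcal X)\leq\mathcal L(\alpha^{(r)},\vartheta^{(r)}|\mathcal X)$, which is the assertion.

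I expect the genuine content to be minimal: the argument is the textbook EM monotonicity proof and, as the paragraph preceding the statement notes, is already subsumed by the convergence analyses of \cite{CH2000,CH2008,Laus2019}. The only points that require care are bookkeeping: keeping the sign convention straight (here $\mathcal L$ is the negative log-likelihood, and the weights $w_i\geq0$ only rescale each summand without affecting the inequality direction), and verifying that the latent variable $Z$ and the conditional densities $p_{Z|\mathbf X}$ are well defined for each of the three component families of Example \ref{ex:distr}, so that the decomposition and the KL term make sense. For the finite mixture structure the natural choice of $Z$ is the discrete component-label variable, whereby $p_{Z|\mathbf X}$ is simply the vector of responsibilities and all occurring expectations reduce to finite sums; there is then no analytic obstacle beyond this routine check.
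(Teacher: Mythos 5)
Your proof is correct and is exactly the classical Dempster--Laird--Rubin monotonicity argument; the paper itself gives no proof of this proposition but defers to the convergence analyses in \cite{CH2000,CH2008,Laus2019}, which formalize precisely this $Q$/$H$ decomposition (in the guise of a Kullback--Leibler proximal point iteration), so your route coincides with what the paper implicitly relies on. One small correction to your closing remark: for the wrapped (and diagonal wrapped) Gaussian models the hidden variable is not just the component label but the pair $(W_k^i, \lfloor Y^i\rfloor)$, so $p_{Z|\mathbf X}$ is a countably infinite discrete distribution indexed by $l\in\Z^{|u_k|}$ rather than a finite vector of responsibilities; this does not harm the argument, since Gibbs' inequality and the interchange of expectation and summation remain valid by the exponential decay of the $\beta_{i,k,l}$, but the expectations are infinite (absolutely convergent) sums rather than finite ones.
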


	The EM algorithm for minimizing the log-likelihood function of the mixture model
	with products of von Mises functions as summands 
	can be realized with a standard approach \cite{Banerjee2005ClusteringOT, mardia2009directional} for mixture models which uses a special hidden random variable $Z$. 
	In particular, the maximum in the M-step can be computed analytically.
	Unfortunately, with this approach, the M-step has no analytical solution in the
	wrapped Gaussians setting, so that we have to choose the hidden variable $Z$ in a different way.
	We describe both approaches in the following.
	
	\paragraph{EM Algorithm for products of von Mises distributions}
	For mixture models, it is common to choose
	hidden variables $Z_{k}^i$ with $Z_{k}^i=1$ if $X^i$ belongs to the $k$-th 
	component of the mixture model and $Z_{k}^i=0$ otherwise. 
	Let $\mathcal X=(x^i)_i$ and $\mathcal Z=(z_{k,l}^i)_{i,k,l}$ be (unknown) joint realizations. 
	\\[1ex]
	\textbf{E-Step:}
	Then, it can be shown, see~\cite{DLR1977,Laus2019} that with the so-called \emph{complete weighted log likelihood function}
	$$
	\ell(\alpha,\vartheta|\mathcal X,\mathcal Z)
	\coloneqq
	\sum_{i=1}^N w_i \sum_{k=1}^K z_k^i \log \left( \alpha_k p_{u_k}(x^i_{u_k}|\vartheta_k) \right)
	$$
	the $Q$ function reads as
	\begin{align}
		Q\left( (\alpha,\vartheta),  (\alpha^{(r)},\vartheta^{(r)}) \right)
		&=
		\E_{(\alpha^{(r)},\vartheta^{(r)})} \left( \ell(\alpha,\vartheta| \mathbf X, Z)|\mathbf X = \mathcal X\right)\\
		&= 
		\sum_{i=1}^N w_i \sum_{k=1}^K \beta^{(r)}_{i,k} \log \left( \alpha_k p_{u_k}(x^i_{u_k}|\vartheta_k) \right)
	\end{align}
	with
	$$
	\beta_{i,k}^{(r)}=
	\frac{\alpha_k^{(r)} p_{u_k} (x^i_{u_k}|\vartheta_k^{(r)})}{\sum_{j=1}^K\alpha_j^{(r)}p_{u_j}(x^i_{u_j}|\vartheta_j^{(r)})}.
	$$
	Note that $\beta_{i,k}^{(r)}$ is an estimate of $z_{k}^i$. Therefore it  can be seen as the probability that $x^i$ 
	arises from the $k$-th summand of the mixture model.
	
	The optimization in the M-step can be done separately for $\alpha$ and $\vartheta$. This results in the 
	EM algorithm for mixture models in Algorithm \ref{alg_em_mm}.
	
	\begin{algorithm}[!ht]
		\caption{EM Algorithm for Mixture Models}\label{alg_em_mm}
		\begin{algorithmic}
			\State Input: $(x^1,...,x^N)\in\T^{d, N}$, $(w_1,\ldots,w_N) \in \mathbb R^N$ and 
			initial estimate $\alpha^{(0)},\vartheta^{(0)}$.
			\For {$r=0,1,...$}
			\State \textbf{E-Step:} For $k=1,...,K$ and $i=1,\ldots,N$ compute 
			$$
			\beta_{i,k}^{(r)}
			=\frac{\alpha_k^{(r)}p_{u_k}(x_{u_k}^i|\vartheta_k^{(r)})}{\sum_{j=1}^K\alpha_j^{(r)}p_{u_j}(x_{u_j}^i|\vartheta_j^{(r)})}
			$$
			\State \textbf{M-Step:} For $k=1,...,K$ compute
			\begin{align}
				\alpha_k^{(r+1)}&=\frac1N \sum_{i=1}^N w_i \beta_{i,k}^{(r)},\\
				\vartheta_k^{(r+1)}&=\argmax_{\vartheta_k}\Big\{\sum_{i=1}^{N} w_i \beta_{i,k}^{(r)} \log(p_{u_k} (x^i_{u_k}|\vartheta_k))\Big\}.
			\end{align}
			\EndFor
		\end{algorithmic}
	\end{algorithm}
	
	It remains to maximize the function in the second M-step.
	For the von Mises model in Example \ref{ex:distr} iii) this can be done analytically 
	as described in the following.
	\\[1ex]
	\textbf{M-Step:} For products of von Mises distributions, 
	the log-density functions in each component of the mixture model are given by
	$$
	\log \left( p_{u_k} (x_{u_k}) \right) = \sum_{j \in u_k} \log\left(p_M (x_j|\mu_{j,k},\kappa_{j,k}) \right).
	$$
	Then
	\begin{align}
		(\mu_k^{(r+1)},\kappa_k^{(r+1)})
		=
		\argmax_{\mu_k,\kappa_k}\Big\{\sum_{i=1}^N w_i\sum_{j\in u_k}\beta_{i,k}^{(r)}\log(p_M(x^i_j|\mu_{j,k},\kappa_{j,k}))\Big\}
	\end{align}
	decouples for $j$.  For the univariate von Mises distribution the log maximum likelihood
	estimator is well-known  \cite{JS2001} and we obtain
	$$
	\mu_{j,k}^{(r+1)}=\frac{1}{2\pi}\arctan^*\Big(\tfrac{S_{j,k}^{(r)}}{C_{j,k}^{(r)}}\Big),\quad \kappa_{j,k}^{(r+1)}=A^{-1}(R_{j,k}^{(r)}),
	$$
	where $A(\kappa)\coloneqq \tfrac{I_1(\kappa)}{I_0(\kappa)}$,
	\begin{align}
		C_{j,k}^{(r)}&\coloneqq\sum_{i=1}^Nw_i\beta_{i,k}^{(r)}\cos(2\pi x^i_j),\quad 
		S_{j,k}^{(r)}\coloneqq\sum_{i=1}^Nw_i\beta_{i,k}^{(r)}\sin(2\pi x^i_j),\\
		R_{j,k}^{(r)} &\coloneqq\tfrac{1}{N\alpha_k^{(r+1)}}\sqrt{(S_{j,k}^{(r)})^2+(C_{j,k}^{(r)})^2}
	\end{align}
	and
	$\arctan^*\colon \R\times\R\to[0,2\pi)$ with $(S,C)\mapsto \arctan^*\big(\tfrac{S}{C}\big)$ 
	denotes the "quadrant specific" inverse of the tangent defined by
	\begin{align}
		\arctan^*\big(\tfrac{S}{C}\big)=\begin{cases}
			\arctan\big(\tfrac{S}{C}\big),&$if$\quad C>0,\quad S\geq 0,\\
			\tfrac{\pi}{2},&$if$\quad C=0,\quad S>0,\\
			\arctan\big(\frac{S}{C}\big)+\pi,&$if$\quad C<0,\\
			\arctan\big(\frac{S}{C}\big)+2\pi,&$if$\quad C>0,\quad S<0,\\
			\tfrac{3\pi}{2},&$if$\quad C=0,\quad S<0,\\
			\text{undefined},&$if$\quad C=S=0.
		\end{cases}\label{eq_atan2}
	\end{align}
	For the function $A$ it is known that $A$ is a strictly increasing, strictly concave with derivative 
	$A'(\kappa)=(1-\tfrac{A(\kappa)}{\kappa}-A^2(\kappa))> 0$ and has the limits $A(\kappa)\to0$ for $\kappa\to0$ 
	and $A(\kappa)\to1$ for $\kappa\to\infty$, see \cite{JS2001}.
	Thus, we can compute the updates of $\kappa$ using Newtons method.
	
	The whole EM algorithm is summarized in Algorithm \ref{alg_SPAMM_vM} in the appendix.
	
	\paragraph{EM Algorithm for Wrapped Gaussians}
	For the wrapped Gaussians, the components of the log-likelihood function \eqref{eq_likelihood}
	read as
	$$
	p_{u_k} (x_{u_k}^i|\vartheta_k) = \mathcal N_{w} (x_{u_k}^i|\mu_k,\Sigma_k), \quad \mu_k \in \T^{n}, \, \Sigma_k \in \SPD(n),
	$$
	where $n = |u_k|$.
	Unfortunately, the maximizer in the second M-step of the EM Algorithm \ref{alg_em_mm} cannot be computed analytically
	for the wrapped Gaussian distribution.
	Therefore we adapt the EM by choosing the variable $Z$ in an appropriate way.
	Note, that the resulting EM algorithm is similar to an EM algorithm for non-sparse mixtures of wrapped Gaussians,
	which was already sketched, e.g.\ in \cite{AS2009}.
	\\[1ex]
	\textbf{E-step:} Let $X^1,...,X^N$ be i.i.d.\ random variables.
	We assign to each $X^i$ a label $W_{k}^i$ with $W_{k}^i=1$ if $X^i$ belongs to the $k$-th 
	component of the mixture model and $W_{k}^i=0$ otherwise. 
	Further, recall that for a random variable $Y\sim \mathcal N(\mu,\Sigma)$ 
	it holds that $Y-\lfloor Y \rfloor\sim \wN(\mu,\Sigma)$. 
	Thus, we assign for each $X^i$ a random variable $Y^i$ 
	such that the conditional distribution of $(Y^i|W_{k}^i=1)$ 
	is given by the distribution $\mathcal N(\mu_k,\Sigma_k)$ 
	and it holds $X^i=Y^i-\lfloor Y^i \rfloor$. 
	Now we use as hidden variables in the EM algorithm the random variables $Z_{k,l}^i$, where $Z_{k,l}^i=1$ 
	if $\lfloor Y^i\rfloor=l$ and $W_{k}^i=1$ for $l\in\Z^{|u_k|}$ and 
	set $Z_{k,l}^i =0$ otherwise.
	Let $\mathcal X=(x^i)_i$ and $\mathcal Z=(z_{k,l}^i)_{i,k,l}$.
	Then, with the appropriate complete weighted log likelihood function
	$$
	\ell(\alpha,\mu,\Sigma|\mathcal X,\mathcal Z)
	\coloneqq
	\sum_{i=1}^N w_i \sum_{k=1}^K\sum_{l\in\Z^{|u_k|}}
	z_{k,l}^i \log(\alpha_k\mathcal N(x^i_{u_k}+l|\mu_k,\Sigma_k)).
	$$
	the $Q$-function reads as
	\begin{align}
		&Q((\alpha,\mu,\Sigma),(\alpha^{(r)},\mu^{(r)},\Sigma^{(r)}))\\
		&=\E_{(\alpha^{(r)},\mu^{(r)},\Sigma^{(r)})}(\ell(\alpha,\mu,\Sigma|X,Z)|X=\mathcal X)\\
		&=\sum_{i=1}^Nw_i\sum_{k=1}^K\sum_{l\in\Z^{|u_k|}}
		\E_{(\alpha^{(r)},\mu^{(r)},\Sigma^{(r)})}\big(Z_{k,l}^i|X=\mathcal X\big)\log(\alpha_k\mathcal N(x^i_{u_k}+l|\mu_k,\Sigma_k))\\
		&=\sum_{i=1}^Nw_i\sum_{k=1}^K\sum_{l\in\Z^{|u_k|}}\beta_{i,k,l}^{(r)}\log(\alpha_k\mathcal N(x^i_{u_k}+l|\mu_k,\Sigma_k)),
		\label{q-fun}
	\end{align}
	where by the definition of conditional probabilities 
	\begin{align}
		\beta_{i,k,l}^{(r)}&=\E_{(\alpha^{(r)},\mu^{(r)},\Sigma^{(r)})}(Z_{k,l}^i|X^i=x^i)
		=
		P_{(\alpha^{(r)},\mu^{(r)},\Sigma^{(r)})}(Z_{k,l}^i=1|X^i=x^i)\\
		&=\frac{\alpha_k^{(r)}
			\mathcal N(x^i_{u_k}+l|\mu^{(r)},\Sigma^{(r)})}{\sum_{j=1}^K\alpha_j^{(r)} \mathcal N_w(x^i_{u_j}|\mu^{(r)},\Sigma^{(r)})}.
	\end{align}
	\textbf{M-step:} 
	Analogously as in the EM algorithm for Gaussian Mixture models, the maximizer of the $Q$ function is given by
	\begin{align}
		\alpha^{(r+1)}_k 
		&= \frac1N \sum_{i=1}^Nw_i\sum_{l\in\Z^{|u_k|}} \beta_{i,k,l}^{(r)},\\
		\mu_k^{(r+1)} 
		&=\frac{1}{N\alpha_k^{(r+1)}} \sum_{i=1}^Nw_i\sum_{l\in\Z^{|u_k|}}\beta_{i,k,l}^{(r)}(x_{u_k}^i+l),\\
		\Sigma_k^{(r+1)}
		&=\frac{1}{N\alpha_k^{(r+1)}}\sum_{i=1}^Nw_i\sum_{l\in\Z^{|u_k|}}
		\beta_{i,k,l}^{(r)}(x_{u_k}^i+l-\mu_k^{(r+1)})(x_{u_k}^i+l-\mu_{k}^{(r+1)})^\tT.
	\end{align}
	
	Unfortunately, for every $i=1,...,N$ and $k=1,...,K$, 
	there are infinity many $\beta_{i,k,l}^{(r)}$. 
	But by definition, the $\beta_{i,k,l}^{(r)}$ decay exponentially for $|l|\to\infty$, 
	since by definition the inverse covariance matrix $\Sigma^{-1}$ of the wrapped normal distribution 
	is positive definite which implies that  $-1/2\left(x+l-\mu\right)^T\Sigma^{-1}\left(x+l-\mu\right)<0$ 
	and goes to $-\infty$ as $|l| \rightarrow \infty$. 
	
	Thus, for numerical purposes it suffices to consider 
	$l\in\{-l_{\mathrm{max}},...,l_{\mathrm{max}}\}^{|u_k|}$ for some $l_{\mathrm{max}}\in\N$.
	In other words, we truncate the infinite sum defining the wrapped Gaussian by
	\begin{align}\label{truncate_eval}
		p_{u_k}(x_{u_k}|\mu_k,\Sigma_k) \approx\sum_{l\in\{-l_\mathrm{max},l_\mathrm{max}\}^{|u_k|}}\mathcal N(x_{u_k}+l|\mu_k,\Sigma_k).
	\end{align}
	Nevertheless, the number of coefficients $\beta_{i,k,l}$ depends exponentially 
	on the dimension $|u_k|$ of the wrapped normal distributions.
	Therefore, the parameter estimation can only be performed for small $|u_k|$
	such that  the evaluation does not become the bottleneck of the computation. 
	
	The whole algorithm is given in the Algorithm \ref{alg_SPAMM} in the appendix.
	
	\paragraph{EM Algorithm for Diagonal Wrapped Gaussians}
	Using in the mixture models only diagonal wrapped normal distributions as in Example \ref{ex:distr} ii),
	we get rid of the exponential dependence of the algorithm on the dimension. 
	This was already observed in \cite{AS2009,SB2005}.
	We have to maximize the log-likelihood function
	\begin{align}
		\mathcal L(\alpha,\mu,\Sigma|\mathcal X)
		&\coloneqq 
		\sum_{i=1}^N w_i
		\log\Big(
		\sum_{k=1}^K \alpha_k \sum_{l\in\Z^{|u_k|}} \prod_{j \in u_k} \mathcal N ( x_j^i + l_j |\mu_{j,k},\sigma_{j,k}^2) 
		\Big).
	\end{align}
	\\[1ex]
	\textbf{E-step:} This step remains basically the same.
	However, we will see that we can sum up over appropriate values of $\beta_{i,k,l}^{(r)}$ to get
	values $\gamma_{i,k,m,j}^{(r)}$. These values
	can finally be computed efficiently in polynomial dependence on the dimension $d$, see \eqref{poly}.
	\\
	\textbf{M-step:}
	We rewrite the $Q$-function in \eqref{q-fun} as
	\begin{align}
		&Q((\alpha,\mu,\Sigma),(\alpha^{(r)},\mu^{(r)},\Sigma^{(r)}))\\
		=&
		\sum_{i=1}^N \sum_{k=1}^K \sum_{l\in\Z^{|u_k|}} w_i \beta_{i,k,l}^{(r)}\log(\alpha_k)
		+
		\sum_{i=1}^N \sum_{k=1}^K \sum_{l\in\Z^{|u_k|}} \sum_{j\in u_k} w_i  \beta_{i,k,l}^{(r)}\log(\mathcal N(x^i_j+l_j|\mu_{j,k},\sigma_{j,k}^2))
		\\
		=&
		\sum_{i=1}^N \sum_{k=1}^K\sum_{l\in\Z^{|u_k|}} w_i \beta_{i,k,l}^{(r)}\log(\alpha_k)
		+
		\sum_{i=1}^N \sum_{k=1}^K \sum_{j\in u_k}\sum_{m\in\Z} w_i \gamma_{i,k,m,j}^{(r)}\log(\mathcal N(x^i_j+m|\mu_{j,k},\sigma_{j,k}^2)),
	\end{align}
	where 
	$\gamma_{i,k,m,j}^{(r)} \coloneqq \sum_{l\in\Z^{|u_k|},l_j=m} \beta_{i,k,l}^{(r)}$.
	\\
	Then maximizing the $Q$-function gives analogously as for Gaussian mixture models
	$$
	\alpha^{(r+1)}_k=\frac1N\sum_{i=1}^Nw_i\sum_{l\in\Z^{|u_k|}}\beta_{i,k,l}^{(r)}=\frac1N\sum_{i=1}^Nw_i\sum_{m\in\Z}\gamma_{i,k,m,j}^{(r)},\quad\text{for any }j\in u_k
	$$
	and
	\begin{align}
		\mu_{j,k}^{(r+1)} 
		&=\frac{1}{N\alpha_k^{(r+1)}}\sum_{i=1}^Nw_i\sum_{m\in\Z}\gamma_{i,k,m,j}^{(r)}(x^i_j+m),\\
		(\sigma_{j,k}^{(r+1)})^2 &=\frac{1}{N\alpha_k^{(r+1)}}\sum_{i=1}^Nw_i\sum_{m\in\Z}\gamma_{i,k,m,j}^{(r)}(x^i_j+m-\mu_{j,k}^{(r+1)})^2.
	\end{align}
	Further, we can rewrite $\gamma^{(r)}_{i,k,m,j}$ as
	\begin{align}
		\gamma^{(r)}_{i,k,m,j}
		&=\frac{\alpha_k^{(r)}\sum_{l\in\Z^{|u_k|},l_j=m}
			\prod_{s\in u_k}\mathcal N(x^i_s+l_s|\mu_{s,k}^{(r)},(\sigma_{s,k}^{(r)})^2)}{\sum_{t=1}^K\alpha^{(r)}_t
			\prod_{s\in u_t} \mathcal N_w (x^i_s|\mu^{(r)}_{s,t},(\sigma^{(r)}_{s,t})^2)}
		\\
		&=
		\frac{\alpha_k^{(r)}\sum_{l\in\Z^{|u_k|},l_j=m}
			\prod_{s\in u_k}\mathcal N(x^i_s+l_s|\mu_{s,k}^{(r)},(\sigma_{s,k}^{(r)})^2)}{\sum_{t=1}^K\alpha^{(r)}_t
			\prod_{s\in u_t}\mathcal N_w(x^i_s|\mu^{(r)}_{s,t},(\sigma^{(r)}_{s,t})^2)}
		\\
		&=
		\frac{\alpha_k^{(r)}\mathcal N(x_j^i+m|\mu_{j,k}^{(r)},(\sigma_{j,k}^{(r)})^2)
			\prod_{s\in u_k\backslash \{j\}}
			\Big(\sum_{l_s\in\Z}\mathcal N(x^i_s+l_s|\mu_{s,k}^{(r)},(\sigma_{s,k}^{(r)})^2)\Big)}{\sum_{t=1}^K\alpha^{(r)}_t
			\prod_{s\in u_t} \mathcal N_w(x^i_s|\mu^{(r)}_{s,t},(\sigma^{(r)}_{s,t})^2)}\\
		&=
		\frac{\alpha_k^{(r)}\mathcal N(x_j^i+m|\mu_{j,k}^{(r)},(\sigma_{j,k}^{(r)})^2)
			\prod_{s\in u_k\backslash \{j\}}\mathcal N_w(x^i_s|\mu_{s,k}^{(r)},(\sigma_{s,k}^{(r)})^2)}{\sum_{t=1}^K\alpha^{(r)}_t
			\prod_{s\in u_t} \mathcal N_w(x^i_s|\mu^{(r)}_{s,t},(\sigma^{(r)}_{s,t})^2)}. \label{poly}
	\end{align}
	
	As in the previous algorithm, for any $i$, $k$ and $j$ there are infinity many $\gamma^{(r)}_{i,k,m,j}$. 
	However, with the same justifications as above the $\gamma^{(r)}_{i,k,m,j}$ decay exponentially for $|m|\to\infty$ 
	such that it suffices to consider $m\in\{-m_\mathrm{max},m_\mathrm{max}\}$. 
	While this approximation led to an exponential dependence of the complexity of the previous EM on $\max_{k=1,...,K}|u_k|$, 
	the complexity of the EM algorithm depends only polynomial on $\max_{k=1,...,K}|u_k|$.
	
	The whole algorithm is given in Algorithm \ref{alg_SPAMM_diag} in the appendix.
	
	\paragraph{Convergence Consideration}
	Finally, we return to the coupled proximum-EM algorithm in \eqref{a1} and \eqref{a2}.
	In the following theorem, we restrict our attention to the mixture model 
	with wrapped Gaussians as components, but the statements
	apply for the other two models in Example \ref{ex:distr}, too.
	
	\begin{theorem}
		Let $(\alpha^{(r)},\mu^{(r)},\Sigma^{(r)})_r$ be generated by \eqref{a1}-\eqref{a2} with one EM step as in Algorithm \ref{alg_SPAMM}. 
		Then the following holds true.
		\begin{enumerate}[i)]
			\item Assume that $\alpha_k^{(r_0)}=0$. Then, we have that $\alpha_k^{(r)}=0$ for any $r\geq r_0$. In particular, 
			the number $\|\alpha^{(r)}\|_0$ of non-zero elements in $\alpha$ is monotone decreasing.
			\item There exists some $\tilde \lambda>0$ such that for all $\lambda>\tilde \lambda$ 
			the sequence $\left(\mathcal L_\lambda(\alpha^{(r)},\mu^{(r)},\Sigma^{(r)}) \right)_r$ is monotone decreasing.
			\item Assume, that $\gamma<K-1$. Then, there exists $r_0\in\N$ such that for any $r\geq r_0$ it holds for any $k=1,...,K$ that either $\alpha_k^{(r)}=0$ or $\alpha_k^{(r)}\geq \sqrt{\frac{2\gamma(K_0-1)}{K_0}}$, where $K_0=\min_{r\in\N}\|\alpha^{(r)}\|_0$.
		\end{enumerate}
	\end{theorem}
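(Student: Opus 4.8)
The plan is to handle the three claims in turn, exploiting that the iteration \eqref{a1}--\eqref{a2} is the composition of the EM map and $\prox_{\gamma h}$ with $h$ from \eqref{prox}, and, crucially, that neither map refers to $\lambda$. For i) I would show that each half-step preserves the zero pattern of $\alpha$. For the EM half-step this is immediate from the M-step update: since $\beta^{(r)}_{i,k,l}$ carries the factor $\alpha^{(r)}_k$ in its numerator, $\alpha^{(r)}_k=0$ forces $\beta^{(r)}_{i,k,l}=0$ for all $i,l$, whence $\alpha^{(r+\tfrac12)}_k=\tfrac1N\sum_i w_i\sum_l\beta^{(r)}_{i,k,l}=0$; conversely $\alpha^{(r)}_k>0$ yields $\alpha^{(r+\tfrac12)}_k>0$, so the EM half-step preserves the support exactly. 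For the prox half-step, Lemma \ref{lem_prox_0} ii) states precisely that a vanishing coordinate of $\alpha^{(r+\tfrac12)}$ stays zero under $\prox_{\gamma h}$. Combining the two and inducting on $r\ge r_0$ gives $\alpha^{(r)}_k=0$ for all $r\ge r_0$, and since the support can then only shrink, $\|\alpha^{(r)}\|_0$ is monotone decreasing.

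For ii) the decisive observation is that the trajectory $(\alpha^{(r)},\mu^{(r)},\Sigma^{(r)})_r$ is produced without any reference to $\lambda$, so I may fix it first and choose $\tilde\lambda$ afterwards. Across one EM half-step, Proposition \ref{cor_conv_EM} gives $\mathcal L(\alpha^{(r+\tfrac12)},\vartheta^{(r+1)})\le\mathcal L(\alpha^{(r)},\vartheta^{(r)})$, while by i) the support is unchanged, so $\|\alpha^{(r+\tfrac12)}\|_0=\|\alpha^{(r)}\|_0$ and $\mathcal L_\lambda$ does not increase for any $\lambda$. Across the prox half-step two cases occur: if $\prox_{\gamma h}$ zeroes no new coordinate then $J$ in Lemma \ref{lem_prox_0} consists only of already-vanishing indices, the redistribution is the identity, $\alpha^{(r+1)}=\alpha^{(r+\tfrac12)}$, and $\mathcal L_\lambda$ is unchanged; if it zeroes at least one coordinate then $\|\alpha^{(r+1)}\|_0\le\|\alpha^{(r+\tfrac12)}\|_0-1$, so this step strictly decreases $\mathcal L_\lambda$ as soon as $\lambda$ exceeds the jump $\mathcal L(\alpha^{(r+1)},\vartheta^{(r+1)})-\mathcal L(\alpha^{(r+\tfrac12)},\vartheta^{(r+1)})$. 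By i), $\|\alpha^{(r)}\|_0$ is a nonincreasing integer sequence bounded below by $1$, hence a reduction can occur at most $K-1$ times along the whole fixed trajectory, and at each such iteration $\mathcal L$ is finite because the wrapped Gaussian densities are strictly positive. Taking $\tilde\lambda$ to be the maximum of these at most $K-1$ finite jumps makes $\mathcal L_\lambda$ monotone decreasing for every $\lambda>\tilde\lambda$.

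For iii) I would again use i) to note that the integer sequence $\|\alpha^{(r)}\|_0$ stabilizes at its minimum $K_0$ for $r\ge r_0$. For such $r$ the prox half-step can neither create nor remove nonzeros, so the minimizer in \eqref{eq_to_mini_K_0} is attained at $n_0:=K-K_0$, the number of (smallest, already-vanishing) coordinates; in particular $g(n_0)\le g(n_0+1)$. Ordering $\alpha_1\le\dots\le\alpha_K$ for the sorted coordinates of $\alpha^{(r+\tfrac12)}$ and using $\sum_{k=1}^{n_0}\alpha_k=0$, the inequality $g(n_0+1)-g(n_0)\ge 0$ collapses to
\[
\frac{1}{2\gamma}\,\alpha_{n_0+1}^2\Big(\frac{1}{K_0-1}+1\Big)-1\ \ge\ 0,
\]
that is $\alpha_{n_0+1}\ge\sqrt{2\gamma(K_0-1)/K_0}$. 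As $\alpha_{n_0+1}$ is the smallest nonzero coordinate, every nonzero coordinate of $\alpha^{(r)}$ inherits this bound for $r$ large, which is the claim. The assumption $\gamma<K-1$ is invoked here to rule out the degenerate stabilization $K_0=1$, for which $n_0+1=K$ is inadmissible in $g$ and the stated bound would be vacuous.

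I expect the genuinely delicate points to be exactly the two finiteness/non-degeneracy statements, not the descent bookkeeping. In ii) the bookkeeping is routine once one notices that the iterates are $\lambda$-free; what needs care is confirming that each of the finitely many sparsification events yields a finite increase of $\mathcal L$, which relies on controlling the redistribution $\hat\alpha_{J^c}=\alpha_{J^c}+\tfrac{1}{|J^c|}\sum_{k\in J}\alpha_k$ of Lemma \ref{lem_prox_0} against $-\log p(x^i\mid\alpha,\vartheta)$. In iii) the corresponding subtlety is verifying that $\gamma<K-1$ indeed forces $K_0\ge 2$, i.e.\ that the support cannot contract to a single atom; this is where I would spend the most effort, since the analytic heart of iii)---the comparison of consecutive values of $g$---is otherwise a direct computation.
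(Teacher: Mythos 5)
Your proposal is correct and follows essentially the same route as the paper's proof: part~i) via support preservation of the EM update combined with Lemma~\ref{lem_prox_0}~ii); part~ii) via the finiteness of the set of iterations at which $\|\alpha^{(r)}\|_0$ drops, the identity $\alpha^{(r+1)}=\alpha^{(r+\tfrac12)}$ whenever no drop occurs, and choosing $\tilde\lambda$ as the maximum of the finitely many likelihood jumps; part~iii) via the fixed-point property $\alpha^{(r)}\in\prox_{\gamma h}(\alpha^{(r)})$ for $r\geq r_0$ together with the energy comparison obtained by zeroing the smallest nonzero coordinate and redistributing its mass (your inequality $g(n_0)\le g(n_0+1)$ is exactly the paper's explicit competitor $\tilde\alpha$). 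One caution on your closing remark: do not spend effort trying to show that $\gamma<K-1$ forces $K_0\ge 2$ --- this implication is false in general (already for uniform weights $\alpha=\tfrac1K 1_K$ and moderate $\gamma$, say $\gamma=\tfrac12$, the prox collapses to a single atom) --- but it is also unnecessary, since for $K_0=1$ the claimed lower bound $\sqrt{2\gamma(K_0-1)/K_0}$ equals $0$ and iii) holds trivially, which is all that is needed (the paper likewise never actually invokes $\gamma<K-1$ in its argument).
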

	
	\begin{proof}
		\begin{enumerate}[i)]
			\item Assume that $\alpha_k^{(r)}=0$. 
			This implies that in Algorithm \ref{alg_SPAMM} it holds $\beta_{i,k,l}^{(r)}=0$ for all $i$ and $l$ 
			and consequently it holds $\alpha^{(r+1/2)}_k=0$. 
			By part ii) of Lemma \ref{lem_prox_0} we obtain that also $\alpha^{(r+1)}_k=0$. 
			Now, induction gives part i).
			
			\item By the first part of the proof, we conclude that the set 
			$R\coloneqq\{r\in\N:\|\alpha^{(r+1)}\|_0<\|\alpha^{(r)}\|_0\}$  
			is finite. 
			Further, by the definition of an $\mathrm{EM}$ step it holds for any $r\in\N$ 
			that $\|\alpha^{(r+1/2)}\|_0=\|\alpha^{(r)}\|_0$. 
			In combination with Propostion \ref{cor_conv_EM} this yields for any $\lambda>0$ that
			\begin{align}\label{eq_descent1}
				\mathcal L_\lambda(\alpha^{(r+1/2)},\mu^{(r+1)},\Sigma^{(r+1)})
				\leq 
				\mathcal L_\lambda(\alpha^{(r)},\mu^{(r)},\Sigma^{(r)}).
			\end{align}
			Thus, we have for $r\not\in R$ that $\|\alpha^{(r+1/2)}\|_0=\|\alpha^{(r+1)}\|_0$, 
			which yields by the definitions the proximal operator and of $h$ in \eqref{prox}
			that $\alpha^{(r+1/2)}=\alpha^{(r+1)}$. 
			Together with \eqref{eq_descent1} we get for any $r\not\in R$ and $\lambda>0$ that
			\begin{align}
				\mathcal L_\lambda(\alpha^{(r+1)},\mu^{(r+1)},\Sigma^{(r+1)})\leq \mathcal L_\lambda(\alpha^{(r)},\mu^{(r)},\Sigma^{(r)}).\label{eq_descent2}
			\end{align}
			For $r\in R$, we have $\|\alpha^{(r+1)}\|_0<\|\alpha^{(r+1/2)}\|_0$. 
			Consequently, we obtain
			\begin{align}
				&\mathcal L_\lambda(\alpha^{(r+1/2)},\mu^{(r+1)},\Sigma^{(r+1)})-\mathcal L_\lambda(\alpha^{(r+1)},\mu^{(r+1)},\Sigma^{(r+1)})\\
				=&\mathcal L(\alpha^{(r+1/2)},\mu^{(r+1)},\Sigma^{(r+1)}|\mathcal X)-\mathcal L(\alpha^{(r+1)},\mu^{(r+1)},\Sigma^{(r+1)}|\mathcal X)\\
				&+\lambda(\|\alpha^{(r+1/2)}\|_0-\|\alpha^{(r+1)}\|_0).
			\end{align}
			This is greater or equal to zero, if
			$$
			\lambda\geq\lambda_r\coloneqq\frac{\mathcal L(\alpha^{(r+1)},\mu^{(r+1)},\Sigma^{(r+1)}|\mathcal X)-\mathcal L(\alpha^{(r+1/2)},\mu^{(r+1)},\Sigma^{(r+1)}|\mathcal X)}{\|\alpha^{(r+1/2)}\|_0-\|\alpha^{(r+1)}\|_0}.
			$$
			Together with \eqref{eq_descent1} we get  for $r\not\in R$ and $\lambda\geq\lambda_r$ that
			\begin{align}
				\mathcal L_\lambda(\alpha^{(r+1)},\mu^{(r+1)},\Sigma^{(r+1)})\leq \mathcal L_\lambda(\alpha^{(r)},\mu^{(r)},\Sigma^{(r)}).\label{eq_descent3}
			\end{align}
			Finally, we set $\tilde\lambda\coloneqq\max_{r\in R}\lambda_r$, which is finite, 
			since $R$ is finite. 
			Combined with \eqref{eq_descent2} and \eqref{eq_descent3} this yields the claim.
			
			\item As in the previous part of the proof, the set 
			$R\coloneqq\{r\in\N:\|\alpha^{(r+1)}\|_0<\|\alpha^{(r)}\|_0\}$ 
			is finite and for $r\not\in R$ holds $\alpha^{(r+1)}=\alpha^{(r+1/2)}$. 
			Now let $r_0 \coloneqq \max_{r\in R} r+1$ and assume that there exists $r\geq r_0$ with 
			$\alpha^{(r)}_k\in\Big(0,\sqrt{\frac{2\gamma(K_0-1)}{K_0}}\Big)$. Then, it holds that $\|\alpha^{(r)}\|_0=K_0$ and
			\begin{align}
				\alpha^{(r)}=\prox_{\gamma h}(\alpha^{(r-1/2)})
				=
				\prox_{\gamma h}(\alpha^{(r)}).\label{eq_prox_equal}
			\end{align}
			Now, define $\tilde \alpha\in\Delta_K$ by $\tilde \alpha_k=0$ and 
			$$
			\tilde \alpha_j=\begin{cases}0,&$if $\alpha_j^{(r)}=0,\\\alpha^{(r)}_j+\frac{\alpha^{(r)}_k}{K_0-1},&$else.$\end{cases}
			$$
			This yields 
			\begin{align}
				\frac{1}{2\gamma}\|\tilde \alpha-\alpha^{(r)}\|^2+\|\tilde\alpha\|_0
				&=\frac{1}{2\gamma}\Big((\alpha^{(r)}_k)^2+(K_0-1)\Big(\frac{\alpha^{(r)}_k}{K_0-1}\Big)^2\Big)+\|\alpha^{(r)}\|_0-1\\
				&=\frac{1}{2\gamma}\frac{K_0}{K_0-1}(\alpha^{(r)}_k)^2+\|\alpha^{(r)}\|_0-1<\|\alpha^{(r)}\|_0.
			\end{align}
			This is a contradiction to \eqref{eq_prox_equal} such that the proof is completed.
		\end{enumerate}
	\end{proof}

	\subsection{Model Selection} \label{sec:find_u}
	%
	The model and optimization algorithm in the previous section assumed, that the $u_k$, $k=1,...,K$ are known. 
	In the following, we propose a heuristic for selecting the $u_k$.
	
	The underlying assumption is, that the distribution of the samples $(x^i)_i$ can be represented 
	by a sparse mixture model
	$$
	p(x)=\sum_{k=1}^Kp(x_{u_k}|\vartheta_k),
	$$
	with a small number of variable interactions, i.e., $|u_k|\leq d_s$, $k=1,...,K$ for some small $d_s\in\{1,...,d\}$.
	Here, we assume that the number $d_s$ is known a priori.
	Fruther, we assume that the number $K$ of required components in the sparse mixture model is small.
	
	For our heuristic, we start with $K=1$ and $u_1=\emptyset$. 
	Then we extend our model iteratively by repeating the following steps $d_s$ times.
	
	\begin{enumerate}
		\item For every $k=1,...,K$, we first compute the probability that the sample $x^i$ belongs to component $k$ of the sparse mixture model. This probability is given by
		$$
		\beta_{i,k}=\frac{\alpha_kp(x_{u_k}^i|\vartheta_k)}{\sum_{j=1}^K \alpha_j p(x_{u_j}^i|\vartheta_j)}.
		$$
		For $m \in[d]\backslash u_k$, we want to test, if we can fit the weighted samples 
		$(x^i)_i$ with importance weights 
		$(w_i\beta_{i,k})_i$ 
		better with a density function 
		$p(x_{u_k\cup \{m\}}^i|\tilde \mu_k,\tilde \Sigma_k)$ as with the density function $p(x_{u_k}|\mu_k,\Sigma_k)$. 
		If the distribution $p(x_{u_k}|\mu_k,\Sigma_k)$ fits the samples perfectly, 
		we have that for any $m \in[d]\backslash u_k$ the samples $(x_m^i)_i$ 
		with importance weights $(w_i\beta_{i,k})_i$ are uniformly distributed and independent from $(x_{u_k}^i)_i$ 
		with weights $(w_i\beta_{i,k})_i$.
		Consequently, we apply two tests.
		First, we apply a Kolmogorov-Smirnov test described in Appendix \ref{sec:test}
		for the hypothesis 
		$$
		H_0\colon \text{The samples } (x_m^i)_i \text{ with weights } (w_i\beta_{ik})_i \text{ are uniformly distributed}
		$$
		against the alternative 
		$$
		H_1\colon \text{The samples } (x_m^i)_i \text{ with weights } (w_i\beta_{ik})_i  \text{ are not uniformly distributed.}
		$$
		The hypothesis $H_0$ is accepted, 
		if the Kolmogorov-Smirnov test statistic, see \eqref{eq_KS_weighted}, fulfills
		$\mathrm{KS}((w_i\beta_{i,k})_i,(x_m^i)_i)<c_1$ 
		for some a priori fixed $c_1\in\R_{>0}$.
		Since it is difficult, to test for independence, our second test is based on the correlation. Here, we test the hypothesis
		$$
		\tilde H_0\colon (w_i\beta_{i,k},x_m^i)_i \text{ and } (w_i\beta_{i,k},x_{u_k}^i)_i \text{ are uncorrelated}
		$$
		against the alternative 
		$$
		\tilde H_1\colon (w_i\beta_{i,k},x_m^i)_i \text{ and } (w_i\beta_{i,k},x_{u_k}^i)_i \text{ are correlated.}
		$$
		The hypothesis $\tilde H_0$ is accepted, if the correlation coefficient
		$$|\mathrm{Corr}((w_i\beta_{i,k},x_m^i)_i,(w_i\beta_{i,k},x_j^i)_i)|<c_2$$
		for all $j\in u_k$ 
		and some a priori fixed $c_2\in\R_{>0}$.
		Now, we set 
		$$
		U_k\coloneqq\{u_k\}\cup\{u_k\cup \{m\}:m\in[d]\backslash u_k \text{ with } 
		H_0\text{ is rejected or } \tilde H_0 \text{ is rejected}\}.
		$$
		and define a new sparse mixture model with $\tilde K=\sum_{k=1}^K|U_k|$ components, 
		where the new $u_i$ are given by the elements of the $U_k$, $k=1,...,K$. 
		For wrapped normal distributions, we initialize the parameters of $u_k\cup \{m\}$ by the following procedure:
		First, we estimate the parameters $(\hat\mu,\hat\sigma^2)$ of a univariate wrapped normal distribution based on the samples $(x_m^i)_i$ with importance weights $(w_i\beta_i,k)_i$.
		Then, we initialize the component with indices $u_k\cup\{m\}$ by the parameters of the distribution of a random variable $X$ characterized by
		$$
		(X_{u_k},X_m)\sim \mathcal N_w(\mu_k,\Sigma_k)\times\mathcal N_w(\hat \mu,\hat\sigma^2),
		$$
		where $(\mu_k,\Sigma_k)$ are the old parameters corresponding to $u_k$.
		\item As a second step, we estimate the parameters $(\alpha,\vartheta)$ of the new sparse mixture model 
		using the iteration \eqref{a1} and \eqref{a2}.
		\item Finally, we reduce the number of components of the sparse mixture model by 
		\begin{enumerate}[i)]
			\item removing all components $k$ with weight $\alpha_k=0$,
			\item replacing the components $k$ and $l$ with $u_k=u_l$ 
			by one component $u_k$ with weight $\alpha_k+\alpha_l$, $\mu_k$ and $\Sigma_k$ 
			if the corresponding distributions are similar.
			As a similarity measure, we use here the \emph{Kullback-Leibler divergence}, which can be approximated by the Monte Carlo method as
			$$
			\mathrm{KL}(p,q)\approx \frac{1}{N_\mathrm{MC}}\sum_{i=1}^{N_{\mathrm{MC}}} \log(p(s_i))-\log(q(s_i)),
			$$
			where the $s_i$ are sampled from the probability distribution corresponding to the density $p$.
		\end{enumerate}
	\end{enumerate}

	\section{Numerical Results}\label{sec:numerics}
	In this section, we demonstrate the performance of our algorithm by four numerical examples.
	The implementation is done in Tensorflow and Python. The code is available online\footnote{\url{https://github.com/johertrich/Sparse_Mixture_Models}}.
	
	In the first two subsections, the non-negative density function $f\colon[0,1]^d\to\R$ with $\|f\|_{L^1}=1$ is given as ground truth
	and we can sample from the corresponding distribution.
	More precisely, we consider the following functions:
	\begin{itemize}
		\item[1.] two mixture models,
		\item[2.] the sum of the tensor products of splines, which was also considered in \cite{BPS2020},
		\item[3.] the normalized Friedman-1 function, which was also examined in \cite{BGG2009,BDL2011,MLH2003}.
	\end{itemize}
	The samples in the third subsection are created in a special way and the underlying density function is unknown.
	
	Since the reconstruction quality possibly depends on the random choice of the samples, we repeat this procedure $10$ times. 
	In the first example, we can directly sample from the distribution, while we use rejection sampling for the two other ones, 
	see e.g.\ \cite{ADDJ2003, RC2013}.
	This works out as follows.\\
	Let $M\geq\sup_{x\in[0,1]^d}f(x)$. Now, we generate a candidate by drawing $x$ from the uniform distribution on 
	$\mathcal U_{[0,1]^d}$ and $z$ from $\mathcal U_{[0,1]}$. 
	Then, we accept $x$ as a sample if $z<f(x)/M$ and reject $x$ otherwise.
	It can be shown, that samples $x^i$, $i=1,...,N$ generated by this procedure correspond to the distribution 
	given by the density $f$, see e.g\ \cite[pp. 49]{RC2013}.
	\\
	
	To evaluate our reconstruction results, we compare 
	\begin{itemize}
		\item[-] the value of the log likelihood function
		of the original function $f$ with those of the estimated one $\hat p$, and
		\item[-]  the relative $L_q$, $q =1,2$ errors 
		$$
		e_{L_q}(\hat p,f)=\frac{\|f-\hat p\|_{L_q}}{\|f\|_{L_q}}.
		$$
		Since we cannot calculate the high-dimensional integral $e_{L_q}$ directly, 
		we approximate it via Monte-Carlo integration. 
		That is, we draw $N_{\mathrm{MC}}=100000$ samples $s_1,...,s_{N_{\mathrm{MC}}}$ 
		from the uniform distribution in $[0,1]^d$ and approximate the $L_q$-norms by
		$$
		\|f\|_{L_q}^q\approx\frac{1}{N_{\mathrm{MC}}}\sum_{i=1}^{N_{\mathrm{MC}}}f(s_i)^q,\quad\|f-\hat p\|_{L_q}^q
		\approx
		\frac{1}{N_{\mathrm{MC}}}\sum_{i=1}^{N_{\mathrm{MC}}}(f(s_i)-\hat p(s_i))^q.
		$$
	\end{itemize}

	\subsection{Samples from Mixture Models}
	For $d=9$, we consider the ground truth density function
	\begin{equation} \label{num_1}
		f(x) 
		\coloneqq
		\sum_{k=1}^6  \alpha_k p_{u_k} (x_{u_k}|\mu_k,\Sigma_k), 
		\quad
		p_{u_k} (x_{u_k}|\mu_k,\Sigma_k) \coloneqq \sum_{l\in\Z^{|u_k|}}\mathcal N(x_{u_k}+l|\mu_k,\Sigma_k)
	\end{equation}
	with
	\begin{align*}
		(u_1,\ldots,u_6) 
		&\coloneqq (\{0,1\},\{2,3\},\{4,5,6\},\{6,7\},\{8,9\},\{2\}),\\ 
		\alpha
		&\coloneqq(0.2,0.2,0.2,0.2,0.1,0.1),\\
		\mu 
		&\coloneqq \frac12 \left( (1,1)^\tT,(1,1)^\tT,(1,1,1)^\tT,(1,1)^\tT,(1,1)^\tT, 1 \right)
	\end{align*}
	and the following settings of covariance matrices:
	\begin{enumerate}[a)]
		\item Diagonal matrices: for $k = 1,\ldots,6$,
		$$                       
		\Sigma_k = \sigma^2 I_{|u_k|},\quad \sigma^2=0.01.
		$$
		\item Non-diagonal matrices: for $k = 1,2,4,5$,
		$$\Sigma_k \coloneqq \sigma^2\Big(\begin{array}{cc}1&c_k\\c_k&1\end{array}\Big), \quad c_1=c_2=0.5,\, c_4=-0.6,\, c_5=0.1,$$ 
		and
		$$
		\Sigma_3 \coloneqq \sigma^2\Bigg(\begin{array}{ccc}1&0.3&0.2\\0.3&1&0.1\\0.2&0.1&1\end{array}\Bigg),
		\quad 
		\Sigma_6=\sigma^2, 
		\quad
		\sigma^2=0.01.
		$$
	\end{enumerate}
	We do all computations for $N=10000$ and $N=50000$ samples. We iterate the heuristic from Section~\ref{sec:find_u} for $d_s=3$ times.
	Then the average relative $L_q$,  $q=1,2$ errors 
	as well as the log likelihood values are given for the three settings from Example \ref{ex:distr}
	in Table \ref{tab_results_synthetic}.
	Figure~\ref{fig_components} shows a diagram with the weights of the recovered couplings $u_k$.
	More precisely, the value of the bar with label $u$ is given by the sum of all 
	$\alpha_k$, where $u_k=u$ in the reconstruction.
	
	\begin{table}
		\begin{center}
			\resizebox*{!}{2.7cm}{
				\begin{tabular}{c|c|cccc}
					Truth&Method&$\mathcal L_f(x^1,...,x^N)$&$\mathcal L_{\hat p}(x^1,...,x^N)$&$e_{L^1}(\hat p,f)$&$e_{L^2}(\hat p,f)$\\\hline
					a)&wrapped&$7185.2\pm 119.3$&$7244.8\pm 126.7$&$0.0727\pm 0.0062$&$0.0879\pm0.0125$\\
					a)&comp.\ wrapped&$7185.2\pm 119.3$&$7227.0\pm 121.0$&$0.0614\pm 0.0048$&$0.0728\pm0.0064$\\
					a)&von Mises&$7185.2\pm119.3$&$7215.1\pm115.9$&$0.0706\pm0.0036$&$0.0793\pm0.0062$\\\hline
					b)&wrapped&$7825.5\pm\n97.6$&$7879.0\pm\n94.3$&$0.0675\pm0.0083$&$0.0824\pm0.0136$\\
					b)&comp.\ wrapped&$7825.5\pm\n97.6$&$7764.0\pm\n96.2$&$0.1165\pm0.0021$&$0.1128\pm0.0043$\\
					b)&von Mises&$7825.5\pm\n97.6$&$7754.1\pm\n94.4$&$0.1182\pm0.0028$&$0.1135\pm0.0035$
			\end{tabular}}\\
			\vspace{0.3cm}
			\resizebox*{!}{2.7cm}{
				\begin{tabular}{c|c|cccc}
					Truth&Method&$\mathcal L_f(x^1,...,x^N)$&$\mathcal L_{\hat p}(x^1,...,x^N)$&$e_{L^1}(\hat p,f)$&$e_{L^2}(\hat p,f)$\\\hline
					a)&wrapped&$35956\pm 167$&$35852\pm 254$&$0.0484\pm 0.0154$&$0.0701\pm0.0313$\\
					a)&comp.\ wrapped&$35956\pm 167$&$35864\pm 170$&$0.0446\pm 0.0161$&$0.0684\pm0.0317$\\
					a)&von Mises&$35956\pm167$&$35945\pm 179$&$0.0387\pm0.0024$&$0.0390\pm0.0035$\\\hline
					b)&wrapped&$39206\pm272$&$39088\pm295$&$0.0507\pm0.0109$&$0.0781\pm0.0191$\\
					b)&comp.\ wrapped&$39206\pm272$&$38719\pm285$&$0.0971\pm0.0032$&$0.0912\pm0.0046$\\
					b)&von Mises&$39206\pm272$&$38722\pm271$&$0.0966\pm0.0072$&$0.0897\pm0.0064$
				\end{tabular}
			}
		\end{center}
		\caption{Approximation of $f$ in \eqref{num_1} with a) and b) by the three sparse mixture models in Example~\ref{ex:distr}. 
			Average value of the log likelihood function and relative $L_q$, $q=1,2$.
			Top: $N=10000$, bottom: $N=50000$.}
		\label{tab_results_synthetic}
	\end{table}
	
	\begin{figure}
		\begin{subfigure}[t]{0.33\linewidth}
			\centering
			\includegraphics[width=\textwidth]{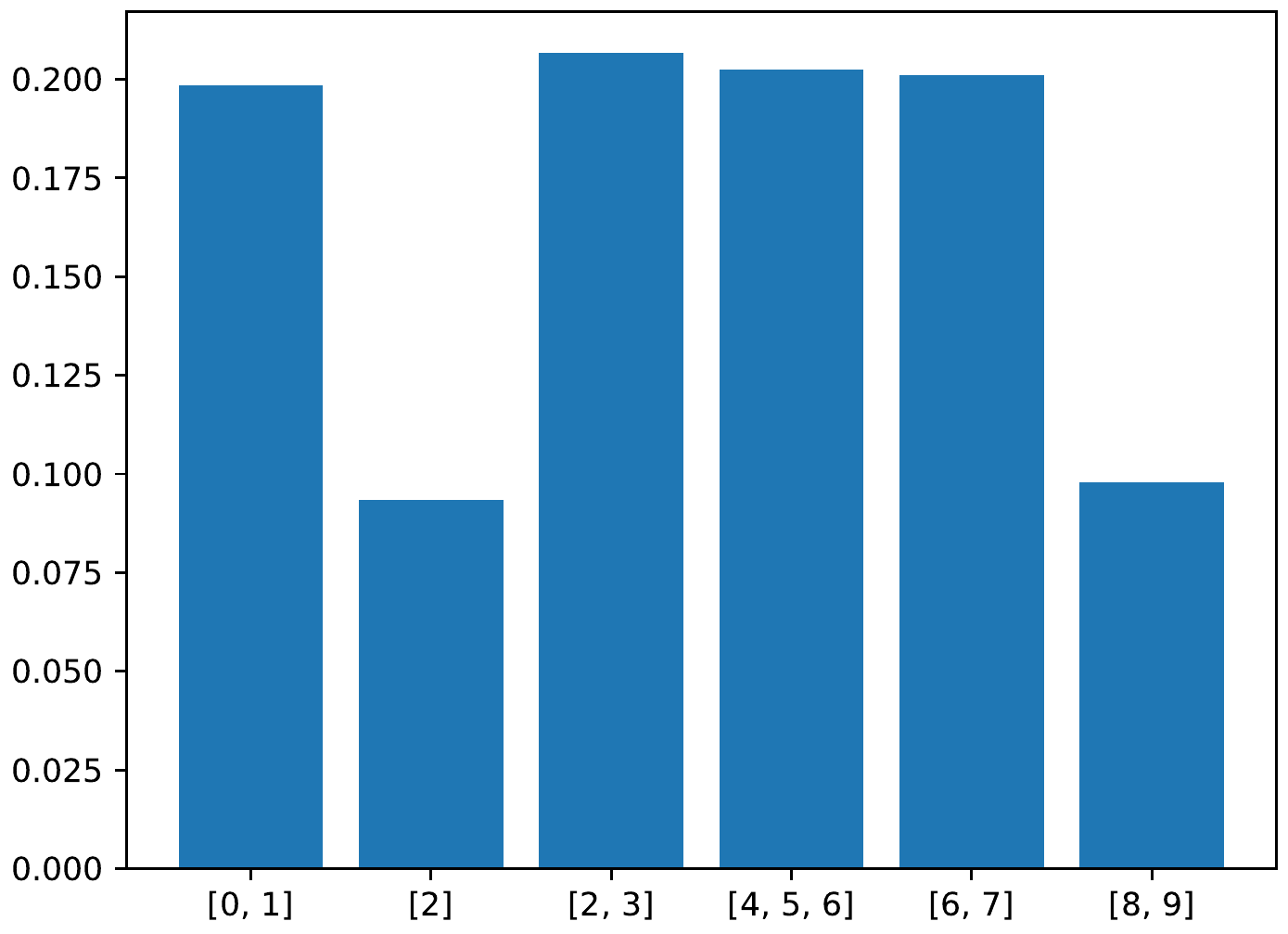}
		\end{subfigure}\hfill
		\begin{subfigure}[t]{0.33\linewidth}
			\centering
			\includegraphics[width=\textwidth]{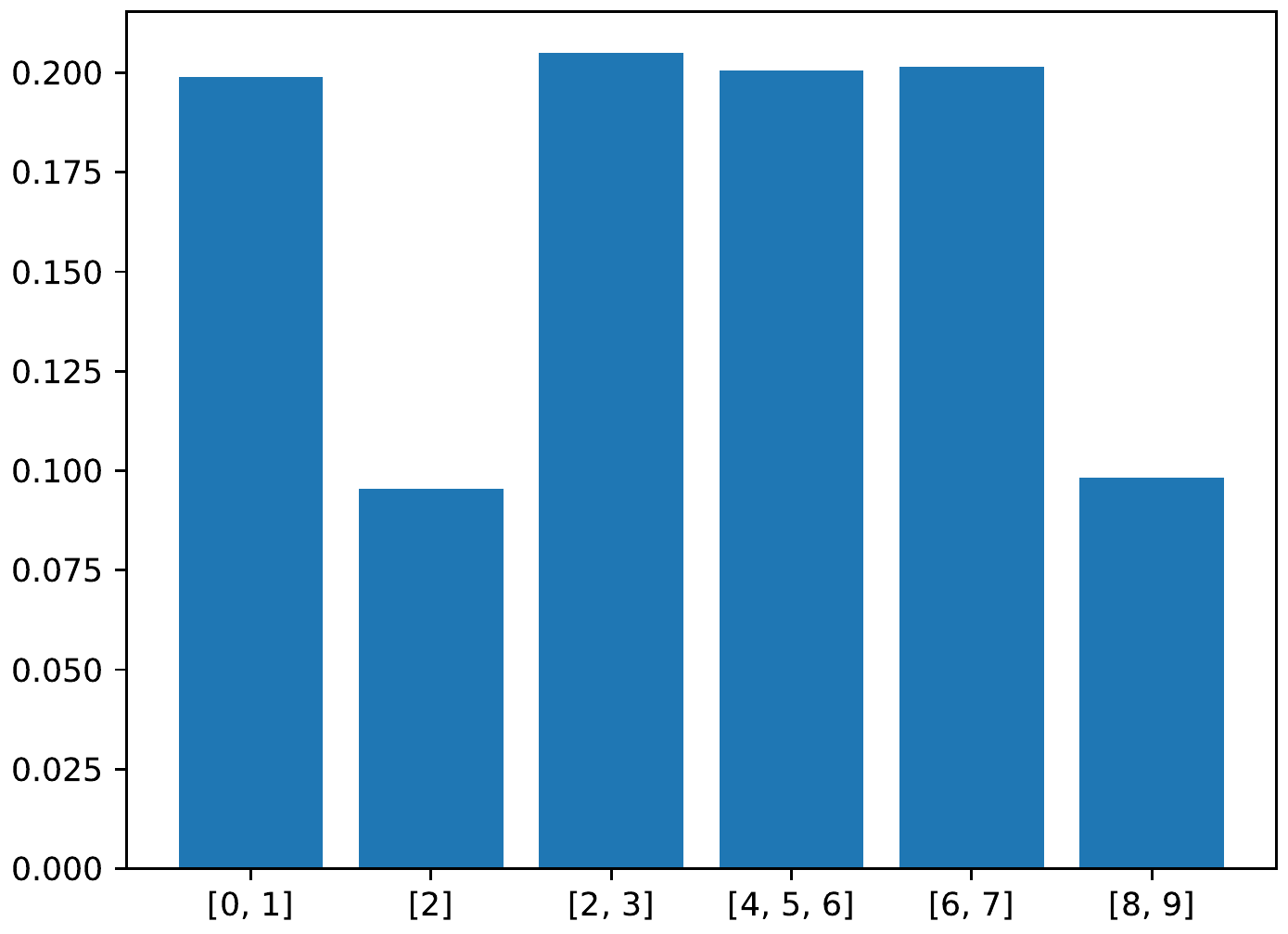}
		\end{subfigure}\hfill
		\begin{subfigure}[t]{0.33\linewidth}
			\centering
			\includegraphics[width=\textwidth]{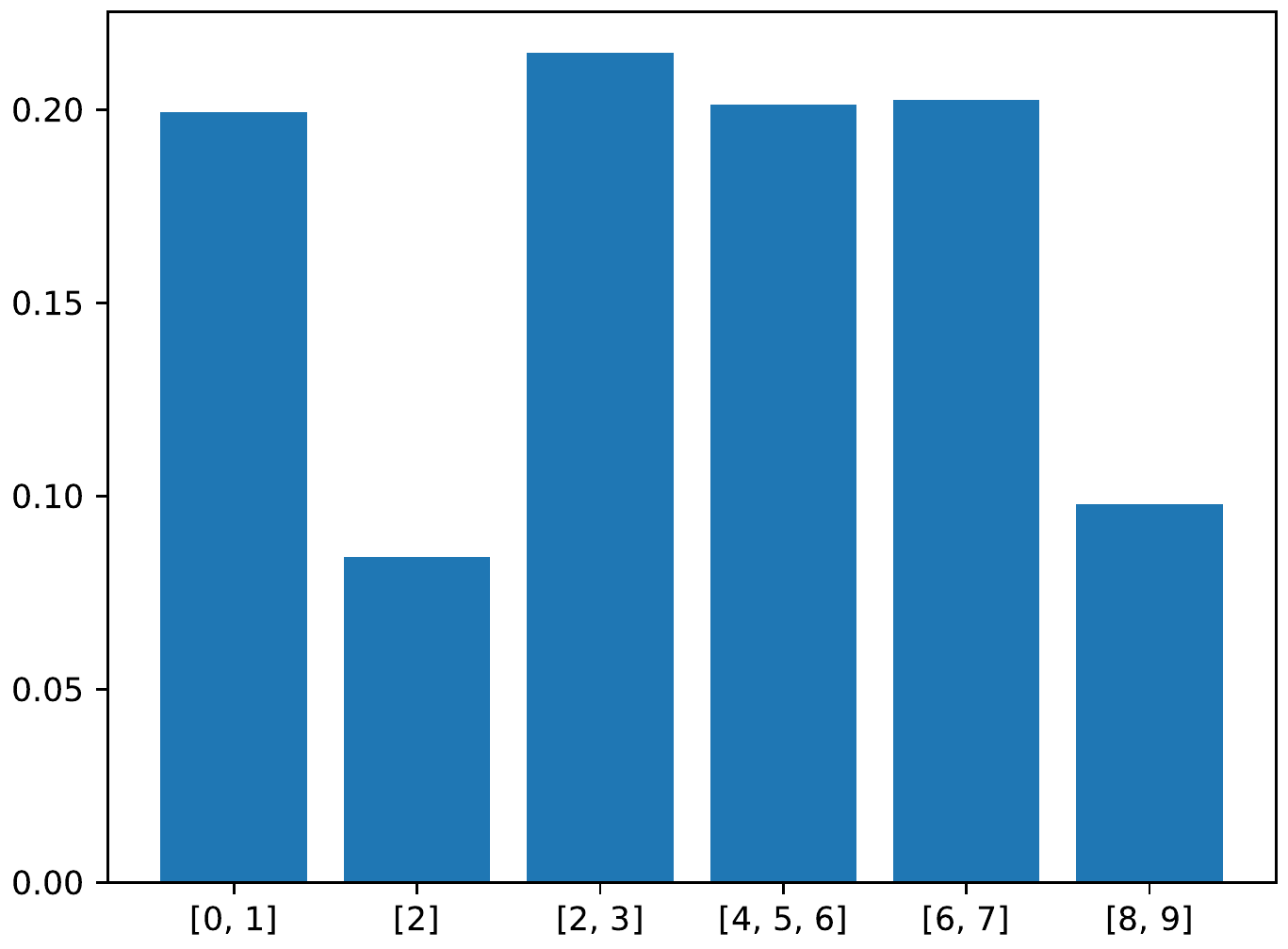}
		\end{subfigure}\hfill
		\begin{subfigure}[t]{0.33\linewidth}
			\centering
			\includegraphics[width=\textwidth]{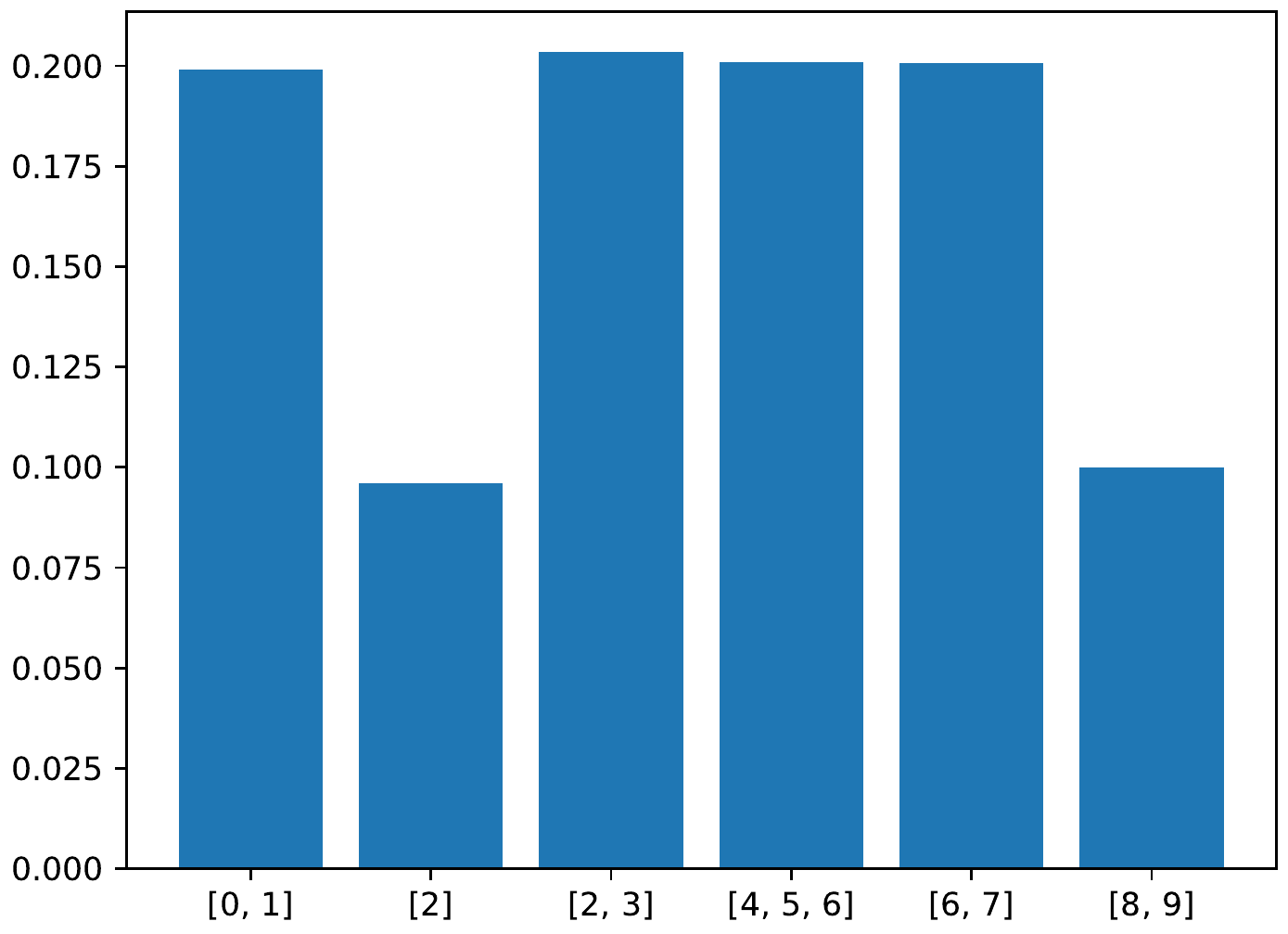}
			\caption*{wrapped}
		\end{subfigure}\hfill
		\begin{subfigure}[t]{0.33\linewidth}
			\centering
			\includegraphics[width=\textwidth]{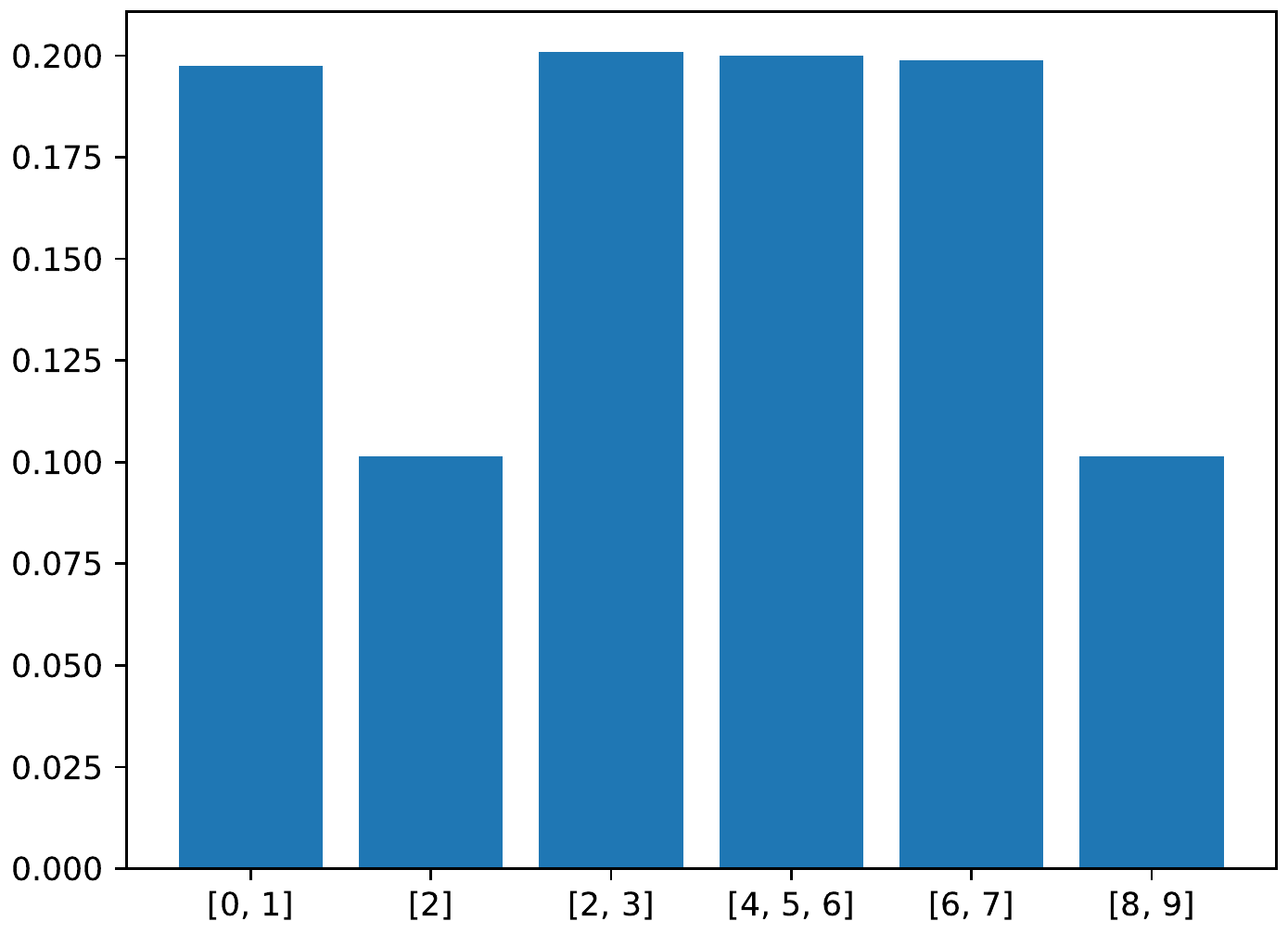}
			\caption*{comp.\ wrapped}
		\end{subfigure}\hfill
		\begin{subfigure}[t]{0.33\linewidth}
			\centering
			\includegraphics[width=\textwidth]{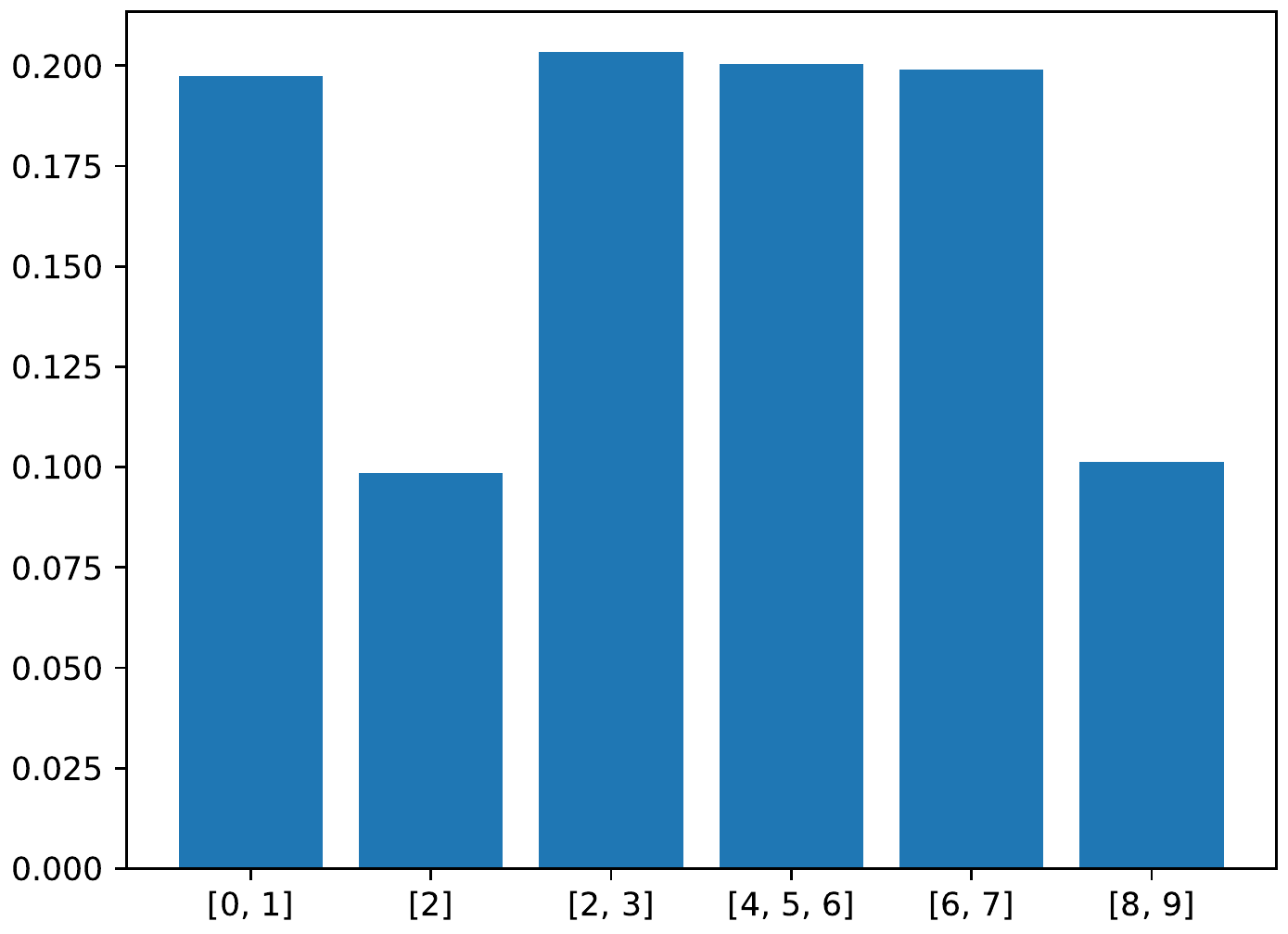}
			\caption*{von Mises}
		\end{subfigure}\hfill
		\caption{Weights of the recovered couplings $u_k$ for the approximation of $f$ in \eqref{num_1} with a) and b) by the three mixture models in Example~\ref{ex:distr} for $N=10000$ samples. Top: model a), bottom: model b).}
		\label{fig_components}
	\end{figure}
	
	We observe that the couplings $u_k$ are reconstructed exactly. 
	Further, the log likelihood value is for the sparse mixture model with full wrapped Gaussian covariance 
	matrices 
	in all cases larger than for the ground truth function.
	Thus, the approximation error is due to the estimation error of the maximum likelihood estimator 
	and therefore due to the lack of information contained in the samples and not due to the approximation method. 
	Further, the sparse mixture model with full wrapped Gaussian covariances admits 
	in all cases a larger log likelihood value than those with the diagonal wrapped Gaussians.
	This should not be surprising, since the sparse mixture model with the wrapped Gaussian covariance matrices contains 
	the other setting.
	
	\subsection{Samples from  Functions}
	Next, we approximate the  functions $f_i/\|f_i\|_{L^1}$, $i=1,2$, 
	where 
	$f_1\colon [0,1]^9\to\R$ 
	and
	$f_2\colon [0,1]^{10} \to\R$ 
	are given by
	\begin{align} 
		f_1(x) &= B_2(x_1)B_4(x_3)B_6(x_8)+B_2(x_2)B_4(x_5)B_6(x_6)+B_2(x_4)B_4(x_7)B_6(x_9),\label{num_2_1}\\
		f_2(x) &= 10\sin(\pi x_1 x_2)+20(x_3+0.5)^2+10x_4+ 5x_5. \label{num_2_2}
	\end{align}
	Here $B_2$, $B_4$ and $B_6$ are the $L_2$ normalized $B$-splines of order $2$, $4$ and $6$ supported on $[0,1]$.
	Note, that the spline function $f_1$ was also used in \cite{BPS2020} for numerical evaluations. The function $f_2$ is the so-called Friedmann-1 function.
	We use $d_s=3$ iterations within the heuristic of Section~\ref{sec:find_u} for $f_1$ and $d_s=2$ iterations for $f_2$.
	The results are given in Table \ref{tab_results_splines}.
	Figure~\ref{fig_components2} shows a diagram with the weights of the recovered couplings $u_k$.
	We observe that all recovered couplings $u_k$ with a significant weight match to the definitions of the functions $f_i$, $i=1,2$.

	\begin{table}
		\begin{center}
			\resizebox*{!}{1.6cm}{
				\begin{tabular}{c|cccc}
					Method&$\mathcal L_f(x^1,...,x^N)$&$\mathcal L_{\hat p}(x^1,...,x^N)$&$e_{L^1}(\hat p,f)$&$e_{L^2}(\hat p,f)$\\\hline
					full&$7009.8\pm 59.4$&$7026.7\pm79.2$&$0.0971\pm0.0053$&$0.0963\pm0.0056$\\
					diagonal&$7009.8\pm 59.4$&$7001.9\pm52.0$&$0.0828\pm0.0030$&$0.0828\pm0.0037$\\
					von Mises&$7009.8\pm59.4$&$6932.4\pm56.4$&$0.1292\pm0.0034$&$0.1272\pm0.0041$
				\end{tabular}
			}\\
			\vspace{.5cm}
			\resizebox*{!}{1.6cm}{
				\begin{tabular}{c|cccc}
					Method&$\mathcal L_f(x^1,...,x^N)$&$\mathcal L_{\hat p}(x^1,...,x^N)$&$e_{L^1}(\hat p,f)$&$e_{L^2}(\hat p,f)$\\\hline
					full&$630.0\pm 43.9$&$566.1\pm43.0$&$0.1150\pm0.0131$&$0.1419\pm0.0142$\\
					diagonal&$630.0\pm 43.9$&$558.6\pm53.8$&$0.1175\pm0.0135$&$0.1444\pm0.0139$\\
					von Mises&$630.0\pm43.9$&$549.8\pm56.4$&$0.1220\pm0.0124$&$0.1490\pm0.0128$
				\end{tabular}
			}
		\end{center}
		\caption{Approximation of the $f_i$ in \eqref{num_2_1} and \eqref{num_2_2} by the three sparse mixture models in Example \ref{ex:distr} for
			$N=10000$ samples. 
			Average value of the log likelihood function and relative $L_q$, $q=1,2$. Top: Spline function $f_1$, bottom: Friedmann-1 function $f_2$.}
		\label{tab_results_splines}
	\end{table}
	
	\begin{figure}
		\begin{subfigure}[t]{0.33\linewidth}
			\centering
			\includegraphics[width=\textwidth]{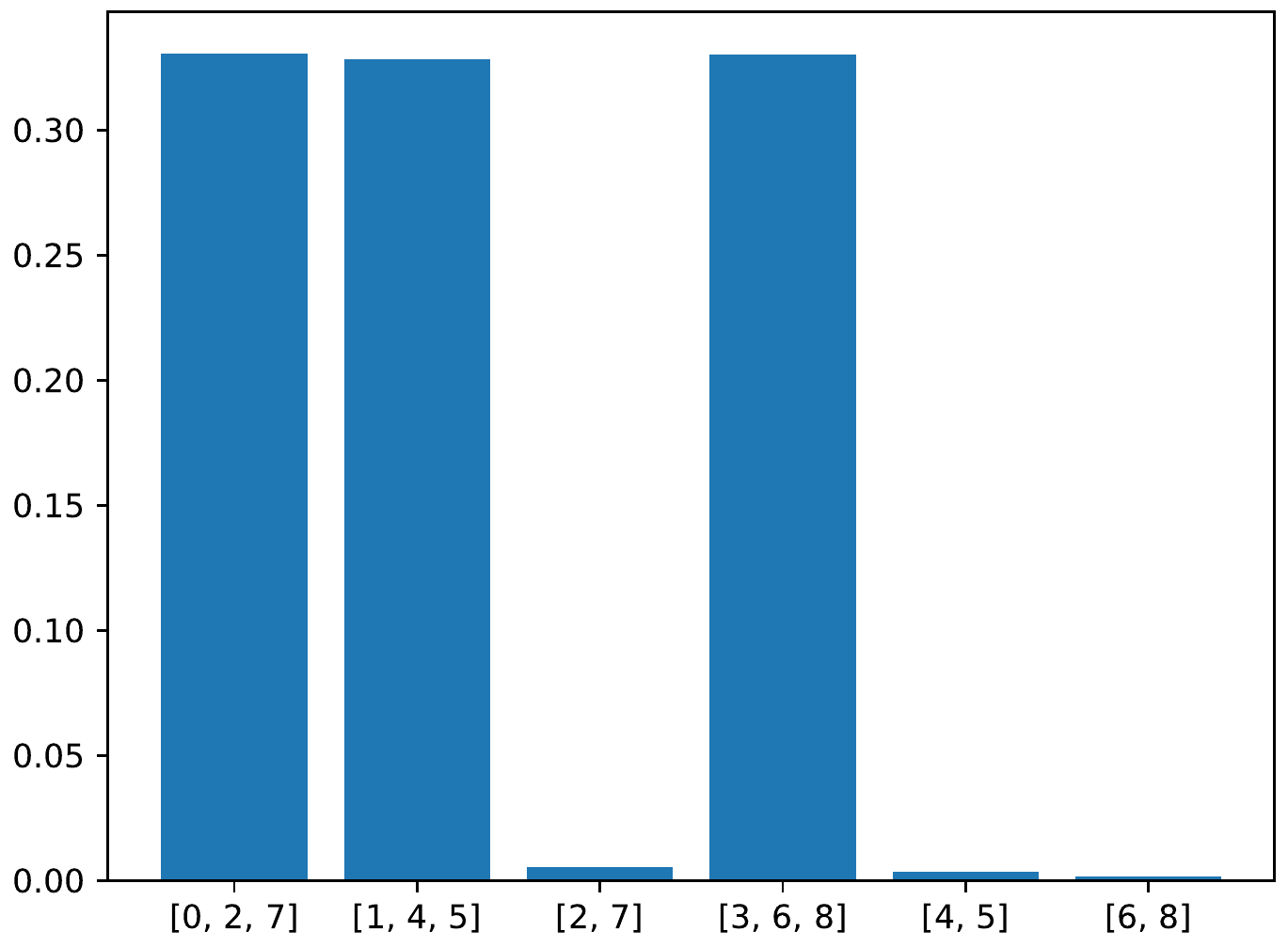}
		\end{subfigure}\hfill
		\begin{subfigure}[t]{0.33\linewidth}
			\centering
			\includegraphics[width=\textwidth]{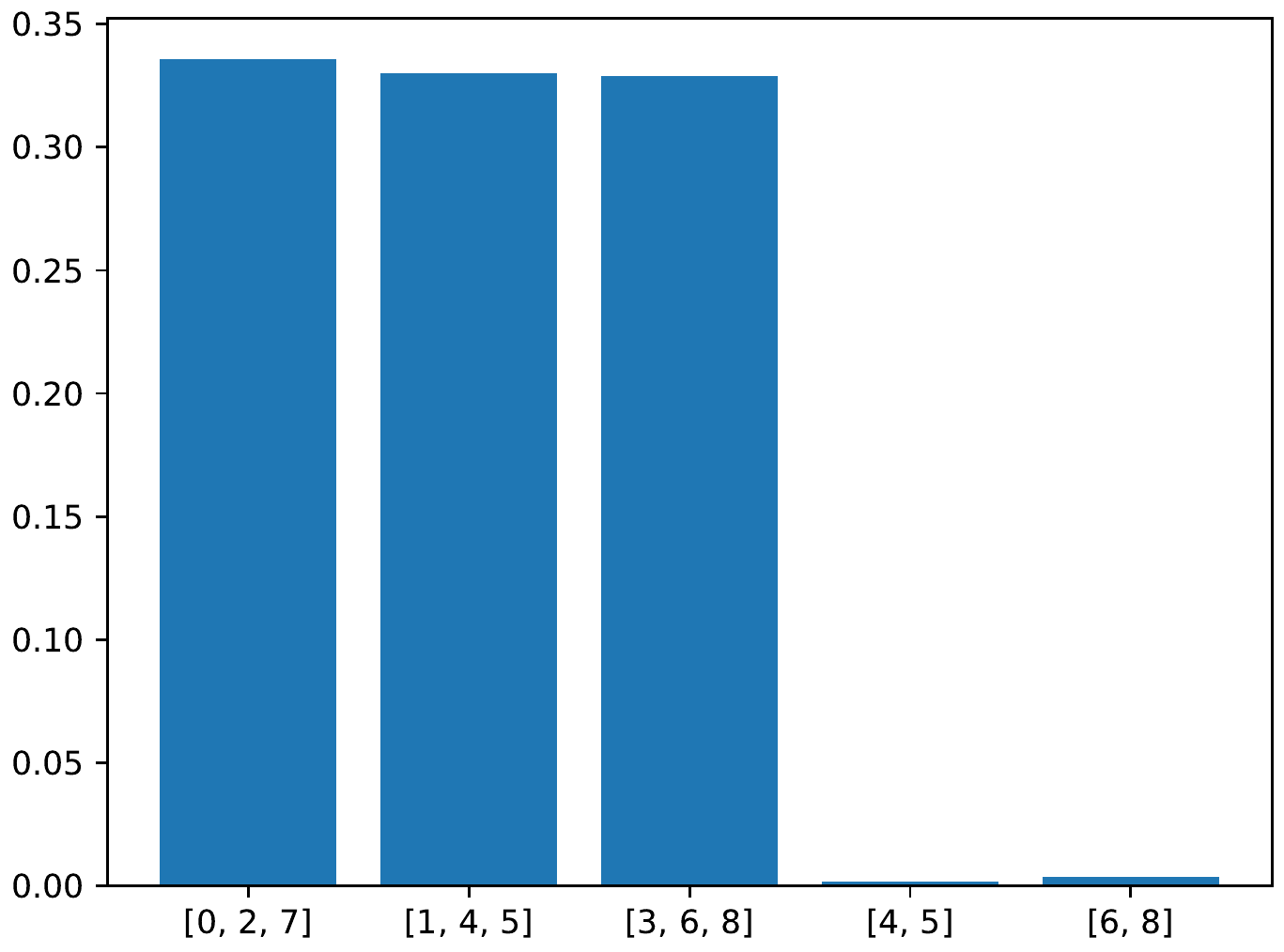}
		\end{subfigure}\hfill
		\begin{subfigure}[t]{0.33\linewidth}
			\centering
			\includegraphics[width=\textwidth]{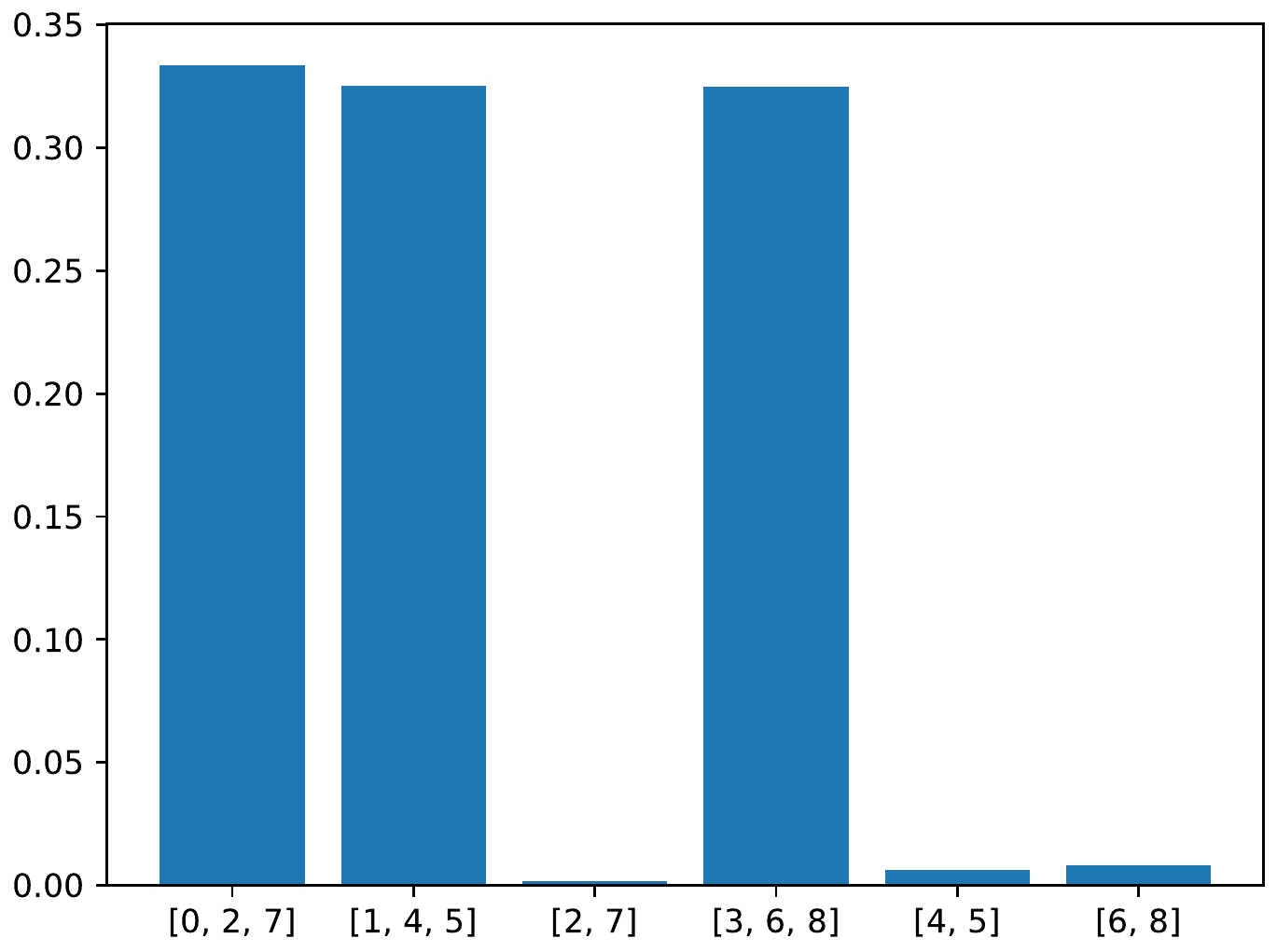}
		\end{subfigure}\hfill
		\begin{subfigure}[t]{0.33\linewidth}
			\centering
			\includegraphics[width=\textwidth]{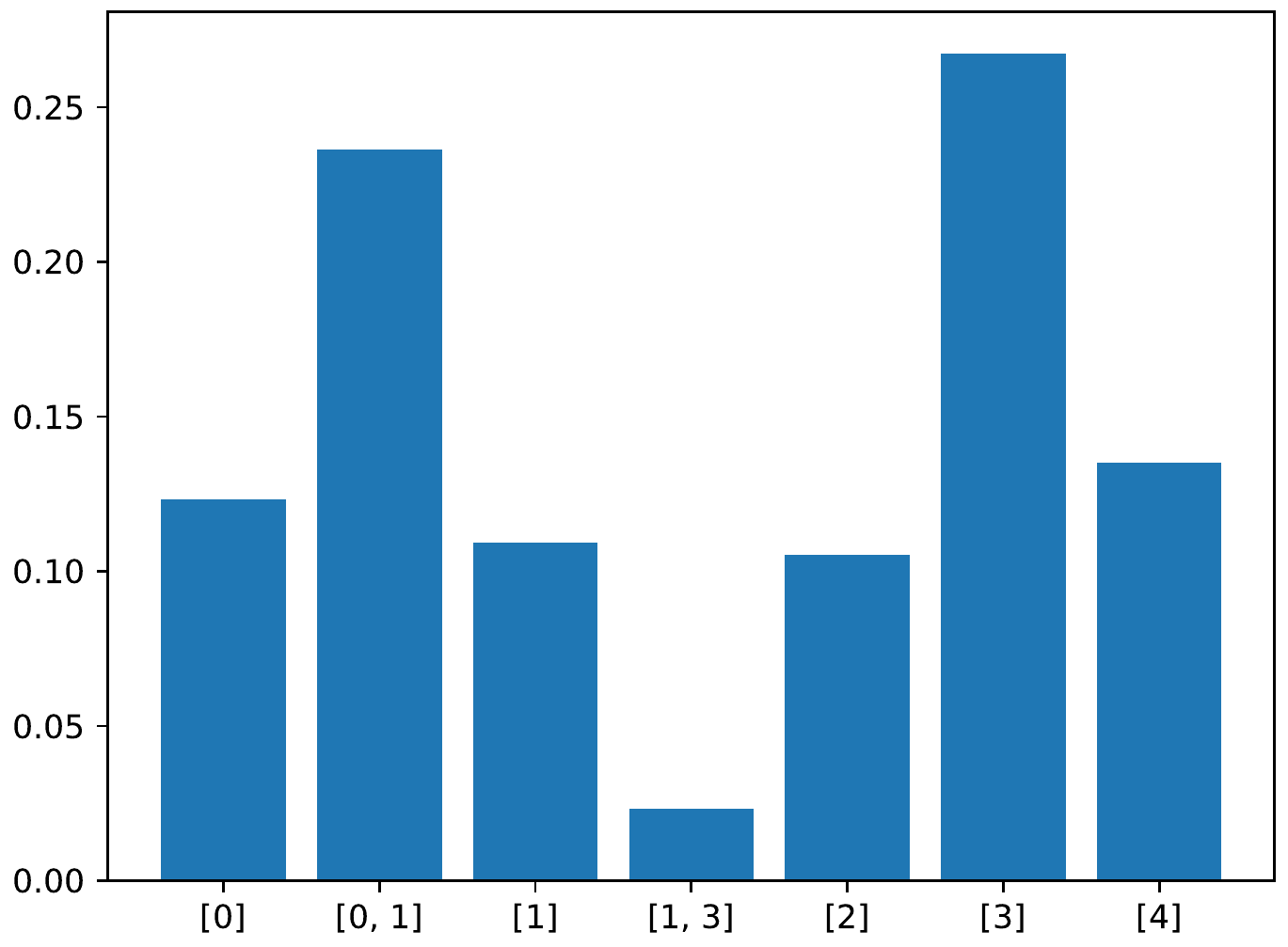}
			\caption*{wrapped}
		\end{subfigure}\hfill
		\begin{subfigure}[t]{0.33\linewidth}
			\centering
			\includegraphics[width=\textwidth]{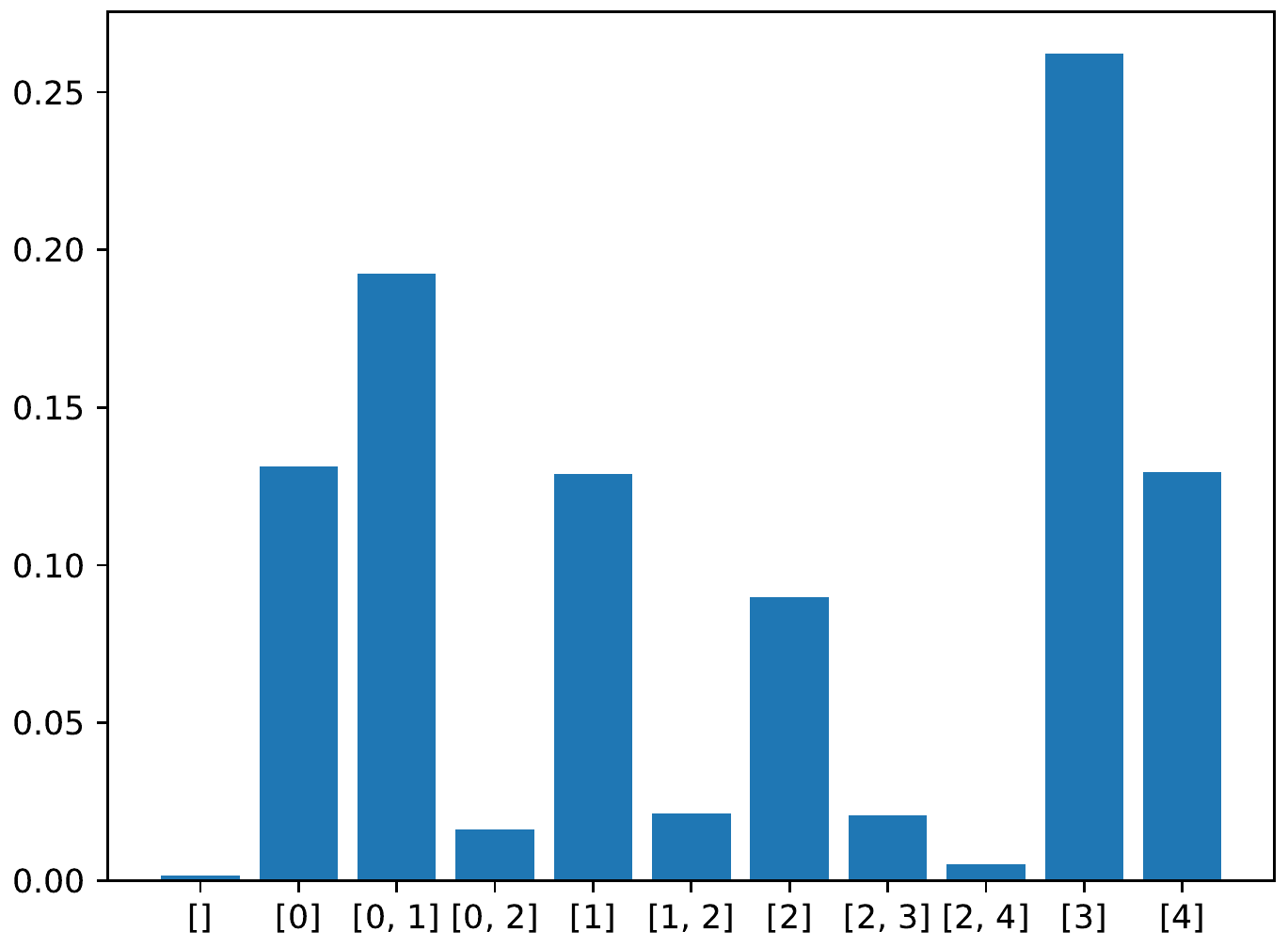}
			\caption*{comp.\ wrapped}
		\end{subfigure}\hfill
		\begin{subfigure}[t]{0.33\linewidth}
			\centering
			\includegraphics[width=\textwidth]{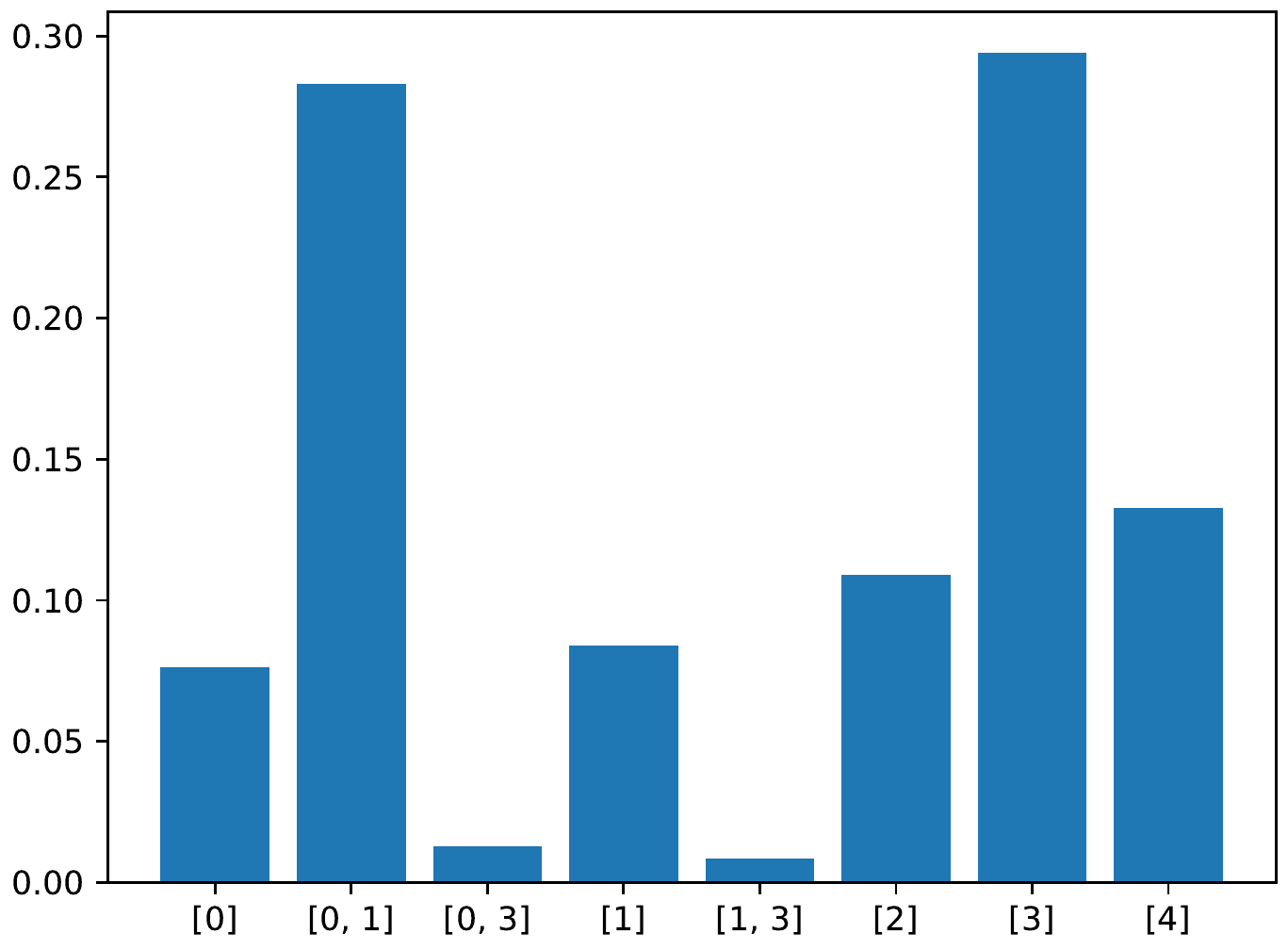}
			\caption*{von Mises}
		\end{subfigure}\hfill
		\caption{Weights of the recovered couplings $u_k$ for the approximation of the $f_i$ in \eqref{num_2_1} and \eqref{num_2_2} by the three sparse mixture models in Example~\ref{ex:distr} for $N=10000$ samples. Top: Spline function $f_1$, bottom: Friedmann-1 function $f_2$.}
		\label{fig_components2}
	\end{figure}

	The estimated parameters achieve a slightly worse log likelihood value than the original function $f$. 
	Thus, the original function fits the samples slightly better than the estimated sparse mixture model.
	This should not be surprising, since the function $f/\|f\|_{L_1}$ is not contained in the class of sparse mixture models. 
	Nevertheless, for the spline function $f_1$ the relative $L_1$ and $L_2$ errors are still comparable with the results from \cite{BPS2020}.

	\subsection{A Synthetic Image Example}
	
	In this example, we consider gray-valued images consisting of $7\times 10$ pixels which are sampled from five different classes.
	Each class contains noisy piece-wise constant images with one straight edge on a fixed position.
	In the following we learn a sparse mixture model for semi-supervised classification of images into one of these classes. 
	Here we assume, that not the whole image is given, but only the orientations of some of the gradients within the images.
	
	\paragraph{Image Generation}
	
	We generate an image $y=(y_{i,j})_{i,j}\in\R^{7\times10}$ using the following procedure:
	\begin{itemize}
		\item \textbf{Class 1:} We draw $a$ from $\mathcal N(0.1,0.05^2)$ and $b$ from $\mathcal N(0.9,0.1^2)$ and set $y_{i,j}=a$ for $j=1,2$ 
		and $y_{i,j}=b$ otherwise.
		\item \textbf{Class 2:} We draw $a$ from $\mathcal N(0.9,0.1^2)$ and $b$ from $\mathcal N(0.1,0.05^2)$ and set $y_{i,j}=a$ for $j=1,...,4$ and $y_{i,j}=b$ otherwise.
		\item \textbf{Class 3:} We draw $a$ from $\mathcal N(0.2,0.025^2)$ and $b$ from $\mathcal N(0.6,0.05^2)$ and set $y_{i,j}=a$ for $j=1,...,6$ and $y_{i,j}=b$ otherwise.
		\item \textbf{Class 4:} We draw $a$ from $\mathcal N(0.7,0.1^2)$ and $b$ from $\mathcal N(0.1,0.05^2)$ and set $y_{i,j}=a$ for $j=1,...,8$ and $y_{i,j}=b$ otherwise.
		\item \textbf{Class 5:} We draw $a$ from $\mathcal N(0.2,0.1^2)$ and $b$ from $\mathcal N(0.9,0.025^2)$ and set $y_{i,j}=a$ for $i=1,...,4$ and $y_{i,j}=b$ otherwise.
	\end{itemize}
	Finally, we add Gaussian white noise with standard deviation $0.2$ to each of the images.
	Figure \ref{fig_imgs} shows one sample from each class.
	
	\begin{figure}
		\centering
		\includegraphics[width=.19\linewidth]{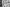}
		\includegraphics[width=.19\linewidth]{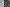}
		\includegraphics[width=.19\linewidth]{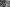}
		\includegraphics[width=.19\linewidth]{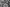}
		\includegraphics[width=.19\linewidth]{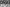}\vspace{.1cm}
		\includegraphics[width=.19\linewidth]{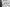}
		\includegraphics[width=.19\linewidth]{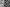}
		\includegraphics[width=.19\linewidth]{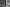}
		\includegraphics[width=.19\linewidth]{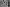}
		\includegraphics[width=.19\linewidth]{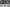}\vspace{.1cm}
		\includegraphics[width=.19\linewidth]{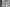}
		\includegraphics[width=.19\linewidth]{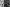}
		\includegraphics[width=.19\linewidth]{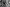}
		\includegraphics[width=.19\linewidth]{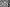}
		\includegraphics[width=.19\linewidth]{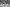}\vspace{.1cm}
		\caption{Three samples from each class.}
		\label{fig_imgs}
	\end{figure}
	
	\paragraph{Gradient Orientations and Data Generation}
	
	We assume that not the full images are given, but only the orientations of some of the gradients within the images.
	For an image $y=(y_{i,j})_{i,j}\in\R^{m\times n}$, 
	the central discrete gradient at position $(i,j)\in\{2,...,m-1\}\times\{2,...,n-1\}$ 
	is defined as the vector 
	$(y_{i+1,j}-y_{i-1,j},y_{i,j+1}-y_{i,j-1})$.
	Consequently, the orientation of the gradient at position 
	$(i,j)$ is given by 
	$$\omega_{i,j}\coloneqq \frac{1}{2\pi}\arctan^*(\tfrac{y_{i+1,j}-y_{i-1,j}}{y_{i,j+1}-y_{i,j-1}}),$$ 
	where $\arctan^*$ again denotes the quadrant specific inverse of the tangent as defined in \eqref{eq_atan2}.
	Finally, we assume that not all of the gradient orientations are given, but only the $\omega_{ij}$ with $(i,j)\in\mathcal I$, where
	$$
	\mathcal I\coloneqq \{(2,2),(2,3),(2,6),(2,7),(4,4),(4,5),(4,8),(4,9),(6,2),(6,3),(6,6),(6,7)\}.
	$$
	Figure~\ref{fig_pattern} visualizes the pixels corresponding to the positions $(i,j)\in\mathcal I$.

	\begin{figure}
		\centering
		\includegraphics[width=.5\linewidth]{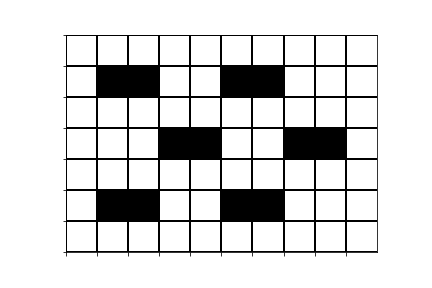}
		\caption{The black marked pixels represent the positions $(i,j)\in\mathcal I$.}
		\label{fig_pattern}
	\end{figure}
	
	Now, we generate $N_{\text{train}}=10000$ training samples and $N_{\text{test}}=1000$ test samples, where each sample $x\in\T^{12}$ is generated by the following steps: 
	First, we choose randomly a class $c\in\{1,...,5\}$ accordingly to some fixed (but for the classification unknown) probabilities $\alpha$.
	Second, we generate an image $y$ from class $c$ as described above and compute the gradient orientations $\omega_{ij}$.
	Finally, we define our sample $x\in\T^{12}$ as $x\coloneqq (\omega_{ij})_{(i,j)\in\mathcal I}$.  
	
	We visualize the components of $(\omega_{ij})_{(i,j)\in\mathcal I}$ 
	by histograms of $10000$ samples from class 2 in Figure~\ref{fig_histogram_data}.
	
	\begin{remark}
		Let us briefly comment why the above sampling generation fits into our model setting.
		If the $Y_{i,j}$ are i.i.d. Gaussian distributed (noise on constant areas), 
		then the components of their centered gradients are also
		i.i.d. Gaussian distributed for $(i,j) \in  \mathcal I$. 
		Note that the special set $\mathcal I$ is needed to ensure independence 
		of the random variables in the gradient.
		Finally, it follows from the transformation theorem, 
		that the random variable, where these $\omega_{i,j}$ are sampled from, 
		are uniformly distributed, see e.g. \cite{DLMM2002}.
		Thus, in our samples $x$ most of the components 
		will arise from a uniformly distributed random variable (constant areas)
		and only few ones (phases of the gradients at edges) 
		must approximated by a Gaussian-like mixture.
	\end{remark}
	
	\begin{figure}
		\centering
		\includegraphics[width=\linewidth]{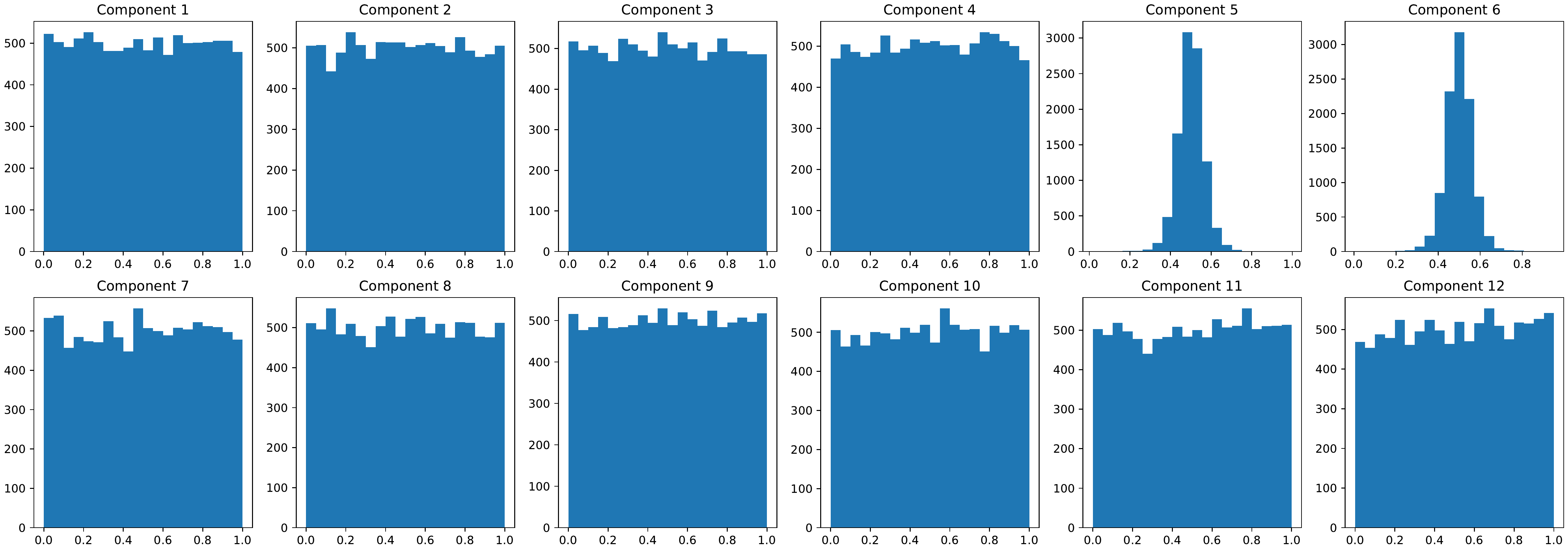}
		\caption{Histograms of the components of $(\omega_{ij})_{(i,j)\in\mathcal I}$ for $10000$ samples from class 2.}
		\label{fig_histogram_data}
	\end{figure}
	
	\paragraph{Semi-Supervised Classification}
	In the following, assume that we have given $N_{\text{train}}$ unlabeled training samples 
	$x^1,...,x^{N_\text{train}}$ (i.e.\ it is unknown, which sample belongs to which class).
	Additionally, we have given $3$ labeled samples $x^{1,c},...,x^{3,c}$ from each class $c=1,...,5$.
	Based on this, we would like to classify the samples from the test set into the five classes.
	For this, we perform three steps.
	\begin{itemize}
		\item First, we estimate the components $u_k$ and the parameters $(\alpha,\mu,\Sigma)$ of a sparse mixture model as described in 
		Section \ref{sec:spamm}, where we use $d_s=4$ steps within the heuristic from Section \ref{sec:find_u}.
		\item In general, this mixture model will have $K > 5$ classes 
		and we have to assign the components to the different classes. 
		For this we assume that we have additionally given three labeled samples 
		$x^{1,c},...,x^{3,c}$ from each class $c=1,...,5$.
		Then we assign $k$ to class $c(k) \in \{1,\ldots,5\}$ by
		$$
		c(k) \coloneqq \argmax_{c=1,...,5} \sum_{i=1}^3 \log \left( p_{u_k}(x^{i,c}_{u_k}|\vartheta_k) \right).
		$$
		\item Finally, we predict for a data point $x$ from the test set the appropriate class
		$$
		\hat c \coloneqq \argmax_{c=1,...,5}\sum_{k\in [K], c_k=c} \alpha_k p_{u_k}(x_{u_k}|\vartheta_k).
		$$                                                  
	\end{itemize}
	Using this procedure, we achieve an accuracy of $93.6 \%$ on the test set.

	\appendix
	\section{EM Algorithm for Distributions in Example \ref{ex:distr}  } \label{sec:A}
	We summarize the EM algorithms for the mixture models with components
	from Example \ref{ex:distr} i) - iii) in this order.

	\begin{algorithm}[!ht]
		\caption{EM Algorithm for MMs with Wrapped Normal Distribution}\label{alg_SPAMM}
		\begin{algorithmic}
			\State Input: $x=(x^1,...,x^N)\in\R^{d, N}$, 
			initialization $\alpha^{(0)}$, $\vartheta^{(0)} = (\mu^{(0)}, \Sigma^{(0)})$.
			\For {$r=0,1,...$}
			\State \textbf{E-Step:} For $k=1,...,K$, $i=1,\ldots,N$ and $l\in\Z^{|u|}$ compute 
			\begin{align}
				\beta_{i,k,l}^{(r)}
				&=
				\frac{\alpha_k^{(r)} \mathcal N(x^i_{u_k}+l|\mu_k^{(r)},\Sigma_k^{(r)})) }
				{\sum_{j=1}^K \alpha_j^{(r)} p(x^i_{u_j}|\mu_j^{(r)},\Sigma_j^{(r)})}
			\end{align}
			\State \textbf{M-Step:} For $k=1,...,K$ compute
			\begin{align}
				\alpha_k^{(r+1)}
				& = \frac1N \sum_{i=1}^N w_i\sum_{l\in\Z^{|u_k|}} \beta_{i,k,l}^{(r)},\\
				\mu_k^{(r+1)}&=\frac{1}{N\alpha_k^{(r+1)}}\sum_{i=1}^N w_i\sum_{l\in\Z^{|u_k|}}\beta_{i,k,l}^{(r)}(x_{u_k}^i+l),\\
				\Sigma_k^{(r+1)}&=\frac{1}{N\alpha_k^{(r+1)}}\sum_{i=1}^N w_i\sum_{l\in\Z^{|u_k|}}\beta_{i,k,l}^{(r)}(x_{u_k}^i+l-\mu_k^{(r+1)})(x_{u_k}^i+l-\mu_{k}^{(r+1)})^\tT.
			\end{align}
			\EndFor
		\end{algorithmic}
	\end{algorithm}
	
	\begin{algorithm}[!ht]
		\caption{EM Algorithm for MMs with Diagonal Wrapped Normal Distribution}\label{alg_SPAMM_diag}
		\begin{algorithmic}
			\State Input: $x=(x^1,...,x^N)\in\R^{d, N}$, 
			initialization $\alpha^{(0)}$, $\vartheta^{(0)} = (\mu^{(0)}, (\sigma_{j,k}^{(0)})^2)$.
			\For {$r=0,1,...$}
			\State \textbf{E-Step:} For $k=1,...,K$, $i=1,\ldots,N$, $j\in u_k$ and $m\in\Z$ compute 
			\begin{align}
				\gamma^{(r)}_{i,k,m,j}
				&=
				\frac{\alpha_k^{(r)}\mathcal N(x_j^i+m|\mu_{j,k}^{(r)},(\sigma_{j,k}^{(r)})^2)\prod_{s\in u_k\backslash \{j\}}\mathcal N_w(x^i_s|\mu_{s,k}^{(r)},(\sigma_{s,k}^{(r)})^2)}{\sum_{t=1}^K\alpha_t^{(r)}\prod_{s\in u_t}\mathcal N_w(x^i_s|\mu^{(r)}_{s,t},(\sigma^{(r)}_{s,t})^2)}
			\end{align}
			\State \textbf{M-Step:} For $k=1,...,K$ compute
			\begin{align}
				\alpha_k^{(r+1)}
				& =\frac1N\sum_{i=1}^N w_i\sum_{m\in\Z}\gamma_{i,k,m,j}^{(r)},\quad\text{for any }j\in u_k\\
				\mu_{j,k}^{(r+1)}&=\frac{1}{N\alpha_k^{(r+1)}}\sum_{i=1}^N w_i\sum_{m\in\Z}\gamma_{i,k,m,j}^{(r)}(x^i_j+m),\\
				(\sigma_{j,k}^{(r+1)})^2&=\frac{1}{N\alpha_k^{(r+1)}}\sum_{i=1}^N w_i\sum_{m\in\Z}\gamma_{i,k,m,j}^{(r)}(x^i_j+m-\mu_{j,k}^{(r+1)})^2.
			\end{align}
			\EndFor
		\end{algorithmic}
	\end{algorithm}
	
	\begin{algorithm}[!ht]
		\caption{EM Algorithm for MMs with products of von Mises Distributions}\label{alg_SPAMM_vM}
		\begin{algorithmic}
			\State Input: $x=(x^1,...,x^N)\in\R^{d, N}$, 
			initialization $\alpha^{(0)}$, $\vartheta^{(0)} = (\mu_{j,k}^{(0)}, \kappa_{j,k}^{(0)})$.
			\For {$r=0,1,...$}
			\State \textbf{E-Step:} For $k=1,...,K$, $i=1,\ldots,N$ compute 
			\begin{align}
				\beta^{(r)}_{i,k}
				&=
				\frac{\alpha_k^{(r)}
					\prod_{j\in u_k}p_M(x^i_j|\mu_{j,k}^{(r)},\kappa_{j,k}^{(r)})}{\sum_{t=1}^K\alpha_t^{(r)}
					\prod_{j\in u_t}p_M(x^i_j|\mu^{(r)}_{j,t},\kappa^{(r)}_{j,t})}
			\end{align}
			\State \textbf{M-Step:} For $k=1,...,K$ and $j\in u_k$ compute
			\begin{align}
				\alpha_k^{(r+1)}
				=\frac1N\sum_{i=1}^N w_i\beta_{i,k}^{(r)},\quad
				\mu_{j,k}^{(r+1)}=\arctan^*\Big(\tfrac{S_{j,k}^{(r)}}{C_{j,k}^{(r)}}\Big),\quad \kappa_{j,k}^{(r+1)}=A^{-1}(R_{j,k}^{(r)}),
			\end{align}
			where
			\begin{align}
				C_{j,k}^{(r)}&=\sum_{i=1}^N w_i\beta_{ik}^{(r)}\cos(2\pi x^i_j),\quad S_{j,k}^{(r)}=\sum_{i=1}^N w_i\beta_{ik}^{(r)}\sin(2\pi x^i_j),\\ 
				R_{j,k}^{(r)}&=\tfrac{1}{N\alpha_k^{(r+1)}}\sqrt{(S_{j,k}^{(r)})^2+(C_{j,k}^{(r)})^2}.
			\end{align}
			\EndFor
		\end{algorithmic}
	\end{algorithm}

	\section{The Weighted Kolmogorov-Smirnov Test} \label{sec:test}
	We briefly review the weighted Kolmogorov-Smirnov (KS) test. 
	The following definition and facts about the KS test can be found in \cite{M2011}.
	Given univariate samples $x^1,...,x^N\in[0,1]$ and a probability distribution defined 
	by its cumulative distribution function $f\colon[0,1]\to[0,1]$, we test the hypothesis
	$$H_0\colon (x^i)_i \text{ belong to the distribution } f$$
	against the alternative 
	$$H_1\colon (x^i)_i \text{ belong not to the distribution } f.$$
	We define the empirical cumulative density function $f_N\colon[0,1]\to[0,1]$ of the samples $(x^i)_i$ by
	$$
	f_N=\frac1N\sum_{i=1}^N 1_{[x^i,1]}.
	$$
	Then, the hypothesis $H_0$ is accepted, if the test statistic
	\begin{align}
		\mathrm{KS}((x^i)_i)\coloneqq \sqrt{N}\|f_N-f\|_{L^\infty}\label{eq_KS_unweighted}
	\end{align}
	is smaller or equal than some constant $c$, which was fixed a priori and controls the significance level of the test.
	It is shown that for $X^1,...,X^N$ i.i.d.\ random variables with cumulative distribution function $f$, 
	the KS test statistic $\mathrm{KS}((X^i)_i)$ converges in distribution to the Kolmogorov distribution as $N\to\infty$.
	
	The test can be extended for weighted samples $(w_1,x^1),...,(w_N,x^N)\in\R_{>0}\times[0,1]$ 
	by replacing the empirical cumulative distribution function by
	$$
	f_N=\frac{1}{\sum_{i=1}^N w_i}\sum_{i=1}^N w_i 1_{[x^i,1]}.
	$$
	Further, one has to replace $\sqrt{N}$ in \eqref{eq_KS_unweighted}. Here, \cite{M2011} 
	suggests to replace $N$ by $\frac{\big(\sum_{i=1}^N w_i)\big)^2}{\sum_{i=1}^N w_i^2}$. 
	Thus, the weighted KS test statistic reads as
	\begin{align}
		\mathrm{KS}((w_i,x^i)_i)=\sqrt{\frac{\big(\sum_{i=1}^N w_i)\big)^2}{\sum_{i=1}^N w_i^2}}\|f_N-f\|_{L^\infty}.\label{eq_KS_weighted}
	\end{align}
	
	\begin{remark}
		Note that for two cumulative distribution functions $f$ and $g$ the term $d(f,g)=\|f-g\|_{L^\infty}$ 
		defines a metric on the probability measures on $[0,1]$.
		In particular, for fixed weights $w_i$, 
		the weighted KS test statistic can be interpreted as the distance of the measure induced by $f$ 
		to the measure $\frac{1}{\sum_{i=1}^N w_i}\sum_{i=1}^N w_i \delta_{x^i}$, 
		where $\delta_{x}$ is the Dirac-measure in $x$.
	\end{remark}
	
	\begin{remark}
		For the uniform distribution the weighted KS test statistic can be easily computed. Assume that the $x^i$ are sorted, i.e.\ $x^1\leq\cdots\leq x^N$, and denote by $s_i=\frac{\sum_{j=1}^i w_j}{\sum_{j=1}^N w_j}$. Then the \eqref{eq_KS_weighted} is given by
		$$
		\mathrm{KS}((w_i,x^i)_i)=\sqrt{\frac{\big(\sum_{i=1}^N w_i)\big)^2}{\sum_{i=1}^N w_i^2}}\max_{i=1,...,N}\{\max(s_i - x_i,x_i-s_{i-1})\}.
		$$
	\end{remark}
	

	\section*{Acknowledgment}
	Funding by the BMBF 01|S20053B project SA$\ell$E
	and by the German Research Foundation (DFG) with\-in the project STE 571/16-1 SUPREMATIM
	is gratefully acknowledged.
	
	
\end{document}